\newtheorem{thm}{Theorem}[section]
\newtheorem{prop}[thm]{Proposition}
\newtheorem{lemma}[thm]{Lemma}
\theoremstyle{definition}
\newtheorem{defn}[thm]{Definition}
\newtheorem{rmk}[thm]{Remark}
\newcommand{\C}{\mathbb{C}}
\newcommand{\Z}{\mathbb{Z}}
\newcommand{\Q}{\mathbb{Q}}
\DeclareMathOperator{\Tr}{Tr}
\DeclareMathOperator{\id}{id}
\DeclareMathOperator{\im}{im}
\DeclareMathOperator{\Ext}{Ext}
\DeclareMathOperator{\Hom}{Hom}
\DeclareMathOperator{\Gl}{GL}
\DeclareMathOperator{\Aut}{Aut}
\DeclareMathOperator{\tr}{tr}
\title{Frobenius Non-Stability of Nilpotent Groups}
\author{Forrest Glebe}
\begin{document}

\maketitle
\begin{abstract}
A countable discrete group is said to be Frobenius stable if every function from the group to unitary matrices that is ``almost multiplicative'' in the Frobenius norm is ``close'' to a unitary representation in the Frobenius norm. The purpose of this paper is to show that finitely generated nilpotent groups that are not virtually cyclic are not Frobenius stable. Our argument proves the same result for other unnormalized Schatten $p$-norms with $1<p\le\infty$.
\end{abstract}

\section{Introduction}

Let $\Gamma$ be a countable discrete group and let $1\le p\le\infty$. Let $||\cdot||_p$ denote the unnormalized Schatten $p$-norm, $||M||_p=(\Tr((M^*M)^{p/2}))^{1/p}$ for $p<\infty$ and operator norm for $p=\infty$. Let $U(k_n)$ be the $k_n\times k_n$ complex unitary group.  A {$p$-asymptotic homomorphism} is a sequence of functions $\rho_n:\Gamma\rightarrow U(k_n)$ so that for all $x,y\in\Gamma$ we have $||\rho_n(xy)-\rho_n(x)\rho_n(y)||_p\rightarrow0$. We say that a $p$-asymptotic representation is {\em $p$-perturbable to a genuine representation} if there is a sequence of unitary representations $\Tilde{\rho}_n:\Gamma\rightarrow U(k_n)$ so that for all $x\in\Gamma$ we have $||\Tilde{\rho}_n(x)-\rho_n(x)||_p\rightarrow0$. We say $\Gamma$ is {\em $p$-stable} if every $p$-asymptotic representation is $p$-perturbable to a genuine representation. Of particular interest is the $p=2$ case, called {\em Frobenius stability}, and the $p=\infty$ case, called {\em matricial stability}. Frobenius stability was first introduced, by Chiffre, Glebsky, Lubotzky, and Thom in~\cite{nonaproximable}. The main purpose of this paper is to prove the following result:

\begin{thm}\label{main3}
Let $\Gamma$ be a finitely generated nilpotent group and let $1<p\le\infty$. Then $\Gamma$ is $p$-stable if and only if $\Gamma$ is virtually cyclic.
\end{thm}

The proof is at the end of Section~4 of the paper. The fact that being virtually cyclic is sufficient follows from a result in~\cite{nonaproximable}, see Lemma~\ref{vfree} for an explanation.

For the torsion-free case at $p=\infty$ our result follows from a more general result of Dadarlat in~\cite{obs}. Combining this with an argument due to Bader, Lubotzky, Sauer, and Weinberger in~\cite{lattice} one can deduce our result in the $p=\infty$ case with torsion as well. See the proof of Theorem~\ref{main3} for more details.

Our result, applied to the $p=2$ case, is the last of four commonly studied ``stability properties'' to be determined for finitely generated nilpotent groups. One such notion is {\em permutation stability}. Permutation stability is defined similarly to $p$-stability, but the unitary group is replaced with $S_n$, the symmetric group on $[n]=\{1,\ldots,n\}$, and the norm metric is replaced with the normalized Hamming distance, $d_n(\sigma,\tau)=\frac1n|\{x\in[n]|\sigma(x)\ne\tau(x)\}|$. In~\cite{permutation}, Becker, Lubotzky, and Thom show that virtually polycyclic groups, and hence finitely generated nilpotent groups are permutation stable. 

Another notion of stability is {\em Hilbert-Schmidt stability} defined similarly to $p$-stability, but the norm used is the normalized Hilbert-Schmidt norm,  $||M||_{\mbox{HS}}=\sqrt{\tr(M^*M)}=||M||_2/\sqrt{k}$ where $\tr$ denotes the normalized trace and $k$ is the number of columns in $M$. It has recently been shown by Levit and Vigdorovich \cite{LVhilbertshmidt} and independently by Eckhardt and Shulman \cite{EShilbertshmidt} that finitely generated nilpotent groups groups are Hilbert-Schmidt stable. Indeed the result of Levit and Vigdorovich shows Hilbert-Shmidt stability for finitely generated virtually nilpotent groups.

To summarize it is now known whether or not finitely generated nilpotent groups are stable in all four of the most studied ways; they are all permutation stable~\cite{permutation} and Hilbert-Schmidt stable \cite{LVhilbertshmidt}\cite{EShilbertshmidt}, while virtually cyclic groups are the only such groups to be matricially stable~\cite{obs} or Frobenius stable~(Theorem~\ref{main3}).

In \cite{voiculescuM} Voiculescu showed that $\Z^2$ is not matricially stable (though the terminology had not yet been invented) by constructing an explicit sequence of pairs of unitaries that commute asymptotically in the operator norm, but remain far, in the operator norm, from pairs of unitaries that commute. By the well-known bounds $||M||\le||M||_p\le k^{1/p}||M||$, where $||\cdot||$ is the operator norm, this sequence also happens to show $\Z^2$ is not $p$-stable, for $p>1$. In \cite{stab} Eilers, Shulman, and S\o rensen, give explicit the operator norm asymptotic representations that are not perturbable, in the operator norm, to genuine representations for non-cyclic torsion-free finitely generated 2-step nilpotent groups. While they do not explicitly mention the Frobenius norm or other $p$-norms, their example can easily be seen to be a $p$-asymptotic representation, for $1<p\le\infty$, and thus proves that these groups are not $p$-stable for the same reason.

As mentioned, it is proved in \cite{obs} that torsion-free finitely generated nilpotent groups, other than $\Z$ and the trivial group are not stable in operator norm. In~\cite{constructive} an explicit formula, in terms of cohomological data, of an operator norm asymptotic representation, that cannot be perturbed, in the operator norm, to a genuine representation, is given for a class of groups that contains all non-cyclic torsion-free finitely generated nilpotent groups. Unfortunately, this formula is not asymptotically multiplicative in the Frobenius norm. In this paper we modify the construction to get asymptotic multiplicativity in Frobenius norm and other Schatten $p$-norms.

To be more explicit, for our formula and the one in \cite{constructive} the operator norm defect to being multiplicative, $||\rho_n(xy)-\rho_n(x)\rho_n(y)||$, is bounded by $2\pi|\sigma(x,y)|/n$ where $\sigma$ is a 2-cocycle on the group (see Proposition~\ref{asymp}). In~\cite{constructive} this acts on an $n^m$ dimensional vector space, where $m$ is the Hirsch length of the group, so this does not imply asymptotic multiplicativity in the Frobenius norm. In this paper the dimension of the space that $\rho_n$ acts on is reduced to $n$ which implies that the defect to being multiplicative in the Frobenius norm is bounded above by~$2\pi|\sigma(x,y)|/\sqrt{n}$. In general the defect to being multiplicative in an unnormalized Schatten $p$-norm, for $p<\infty$, is $2\pi|\sigma(x,y)|n^{\frac1p-1}$, which goes to zero for $p>1$.

The paper is organized as follows. Section 2 has relevant background information. Section 3 introduces a condition on a cohomology class (Definition~\ref{skinny}) and proves that all non-cyclic torsion-free finitely generated nilpotent groups have a cohomology class of a this form that is non-torsion (Theorem~\ref{main1}). In Section 4 we explain why this implies the main result by modifying arguments from \cite{constructive}. First we prove the main result in the torsion-free case by making a formula for a $p$-asymptotic representation (Proposition~\ref{formula}) in terms of a cocycle in the relevant cohomology class then showing it cannot be perturbed to a genuine representation (Theorem~\ref{main2}). Finally, at the end of Section~4 we give a proof of Theorem~\ref{main3}, by reducing to the torsion-free case.

\section{Background Information}

\subsection{Notation}
We will use $e$ as the identity of a multiplicative group, and $0$ for the identity of $\Z$. This is to avoid confusion with $1$ denoting the generator of the group $\Z$. We will also use $e$ for the trivial group in commutative diagrams. 

Throughout the paper $||\cdot||_p$ will denote the unnormalized Schatten $p$-norm, and $||\cdot||$ will denote the operator norm, but in some circumstances we will write $||\cdot||_p$ and allow for the case $p=\infty$ to consider operator norm simultaneously with other norms.

We will use $U(n)$ to denote the $n\times n$ complex unitary group.

When we have a nested (semi)-direct product we will use coordinates to describe elements. For example if $\Gamma$ is an arbitrary discrete group then a typical element of $(\Z\times\Gamma)\rtimes_\gamma\Z$ is written as $(x,y,z)$ with $x,z\in\Z$ and $y\in\Gamma$. The identity of this group could be written as $(0,e,0)$. When writing a semi-direct product $\Gamma\rtimes_\gamma\Z$ we will simply specify an element $\gamma\in\Aut(\Gamma)$ so that the action of $\Z$ on $\Gamma$ is determined by $1\mapsto\gamma$. 

\subsection{Group (Co)Homology}
There are many ways to characterize group homology and cohomology, but to us the most useful will be to describe them as the homology and cohomology of an explicit chain complex described below. We will only use homology and cohomology with coefficients in $\Z$ and the trivial action in this paper. For more about this construction see \cite[Chapter II.3]{coho}.

\begin{defn}
Let $\Gamma$ be a discrete group. For $n>0$ we define $C_n(\Gamma)$ to be the free abelian group generated by elements of $\Gamma^n$, and define $C_0(\Gamma)=\Z$. We may write an element of $\Gamma^n$ as $[a_1|a_2|\cdots|a_n]$ with $a_i\in\Gamma$. We thus write a typical element of $C_n(\Gamma)$ as
$$c=\sum_{i=1}^Nx_i[a_{i1}|a_{i2}|\cdots|a_{in}]$$
with $x_i\in\Z$ and $a_{ij}\in\Gamma$. We define the boundary map $\partial_n$ from $C_n(\Gamma)\rightarrow C_{n-1}(\Gamma)$ by
$$\partial_n[a_1|\cdots|a_n]=$$
\vspace{-7mm}
$$[a_2|\cdots|a_{n}]+\sum_{i=1}^{n-1}(-1)^i[a_1|\cdots|a_{i-1}|a_ia_{i+1}|a_{i+2}|\cdots|a_n]+(-1)^n[a_1|\cdots|a_{n-1}]$$
for $n>1$ and $\partial_1=0$. Often we will just write $\partial$ where the domain is clear from context. The group {\em homology} of $\Gamma$ is the homology group the chain complex $(C_\bullet(\Gamma),\partial_\bullet)$. \end{defn}

For readers who are less familiar with homological algebra we will be more explicit. It happens that $\im(\partial_n)\subseteq\ker(\partial_{n+1})$ so we define the $n$th homology group of $\Gamma$ to be
$$H_n(\Gamma):=\ker(\partial_n)/\im(\partial_{n-1}).$$
An element of $\ker\partial_n$ is called an {\em $n$-cycle}.

\begin{defn}
The {\em cohomology of $\Gamma$} is the cohomology of $(C_\bullet,\partial_\bullet)$ with coefficients in $\Z$.
\end{defn}

We will again be more explicit for readers who are less familiar with homological algebra. We use the notation
$$C^n(\Gamma):=\Hom(C_n(\Gamma),\Z)$$
and note that this is isomorphic to the group of functions from $\Gamma^n$ to~$\Z$ for $n>0$ and isomorphic to $\Z$ for $n=0$. Then we define $\partial^n:C^n(\Gamma)\rightarrow C^{n+1}(\Gamma)$ to be the adjoint\footnote{Actually \cite{coho} defines the coboundary map to be $(-1)^{n+1}$ times the adjoint, but this does not change the image or kernel boundary of the maps so it leads to an equivalent definition of the cohomology groups.} of $\partial_{n+1}$. This gives us
$$(\partial^n\sigma)(a_1,\ldots,a_{n+1})=$$
\vspace{-7mm}
$$\sigma(a_2,\ldots, a_{n+1})+\sum_{i=1}^n(-1)^i\sigma(a_1,\ldots, a_{i-1},a_ia_{i+1},a_{i+2},\ldots, a_{n+1})+(-1)^{n+1}\sigma(a_1,\ldots,a_n)$$
for $n>0$ and $\partial^0=0$. Then we define 
$$H^n(\Gamma):=\ker(\partial^n)/\im(\partial^{n+1}).$$
An element of $\ker(\partial^n)$ is called an {\em $n$-cocycle}. If $\sigma$ is an $n$-cocycle then we denote its cohomology class as $[\sigma]$.
\begin{defn}
We say an $n$-cocycle $\sigma$ is {\em normalized} if for each $i$ we have that $a_i=e$ implies that $\sigma(a_1,\ldots ,a_n)=0$.
\end{defn}
As with homology we will suppress the $n$ in $\partial^n$ if the domain is obvious from context.

Suppose that $f$ is a group homomorphism from $\Gamma_1$ to $\Gamma_2$. This induces  a map $f_*:H_n(\Gamma_1)\rightarrow H_n(\Gamma_2)$ defined by the formula
$$\sum_{i=1}^Nx_i[a_{i1}|\cdots|a_{in}]\mapsto\sum_{i=1}^Nx_i[f(a_{i1})|\cdots|f(a_{in})].$$
Similarly there is a map from $f^*:H^n(\Gamma_2)\rightarrow H^n(\Gamma_1)$ defined by
$$(f^*(\sigma))(a_1,\ldots a_n)=\sigma(f(a_1),\ldots, f(a_n)).$$
Both maps are functorial.

In general cohomology is a bifunctor by allowing ``coefficients'' in an arbitrary abelian group with a $\Gamma$-action. For us that will mean that if $f:\Z\rightarrow\Z$ is a group homomorphism this induces a map $f_*:H^n(\Gamma)\rightarrow H^n(\Gamma)$. It must be the case that $f$ is multiplication by some integer $k$, and it happens that~$f_*$ is also multiplication by $k$. 

Because $C^n(\Gamma)$ is isomorphic to $\Hom(C_n(\Gamma),\Z)$ there is a natural bilinear map from $C^n(\Gamma)\times C_n(\Gamma)\rightarrow \Z$ defined by
$$\left\langle\sigma,\sum_{i=1}^Nx_i[a_{i1}|\cdots|a_{in}]\right\rangle=\sum_{i=1}^Nx_i\sigma(a_{i1},\ldots,a_{in}).$$
This descends to a well-defined bilinear map from $H^n(\Gamma)\times H_n(\Gamma)\rightarrow \Z$. We will use the notation $\langle\cdot,\cdot\rangle$ for both maps. The maps $f_*$ and $f^*$ are adjoint in the following sense
$$\langle f^*(\sigma),c\rangle=\langle\sigma,f_*(c)\rangle.$$

\begin{defn}
If $\Gamma$ is a discrete group a {\em central extension} of $\Gamma$ by $\Z$ is a short exact sequence
$$\begin{tikzcd}
e\arrow{r} & \Z
\arrow{r}{} &
\Tilde{\Gamma} \arrow{r}{}
& \Gamma\arrow{r}{}& e
\end{tikzcd}$$
where the image of $\Z$ in $\tilde{\Gamma}$ is central in $\tilde{\Gamma}$. We say two central extensions are {\em equivalent} if we can make a commutative diagram as follows:
$$\begin{tikzcd}
e\arrow{r} & \Z
\arrow{r}{}\arrow{d}{\id_\Z} &
\Tilde{\Gamma}_1\arrow{d}{\cong} \arrow{r}{}
& \Gamma\arrow{r}{}\arrow{d}{\id_\Gamma}& e
\\
e\arrow{r} & \Z
\arrow{r}{} &
\Tilde{\Gamma}_2 \arrow{r}{}
& \Gamma\arrow{r}{}& e.
\end{tikzcd}$$
\end{defn}

\begin{thm}[{\cite[Theorem IV.3.12]{coho}}] As a set $H^2(\Gamma)$ is in bijection with the equivalence classes of central extension of $\Gamma$ by $\Z$.
\end{thm}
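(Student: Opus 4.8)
The plan is to build two mutually inverse maps, one sending a central extension to a cohomology class and one sending a cocycle to a central extension, following the classical argument (this is exactly the content of the cited \cite[Theorem IV.3.12]{coho}).

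\emph{From extensions to cohomology.} Given a central extension $e\to\Z\xrightarrow{\iota}\tilde\Gamma\xrightarrow{\pi}\Gamma\to e$, I would fix a set-theoretic section $s\colon\Gamma\to\tilde\Gamma$ of $\pi$. Since $\pi(s(a)s(b)s(ab)^{-1})=e$, there is a unique $\sigma(a,b)\in\Z$ with $s(a)s(b)=\iota(\sigma(a,b))s(ab)$. Expanding the associativity relation $(s(a)s(b))s(c)=s(a)(s(b)s(c))$ via this identity and cancelling — here is the one place the centrality of $\iota(\Z)$ is used, to move $\iota(\sigma(b,c))$ past $s(a)$ — yields exactly $(\partial^2\sigma)(a,b,c)=0$, so $\sigma\in C^2(\Gamma)$ is a cocycle. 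Any other section has the form $s'=\iota(\tau(\cdot))s(\cdot)$ for a function $\tau\colon\Gamma\to\Z$, and a short computation gives $\sigma'=\sigma+\partial^1\tau$; moreover a chosen equivalence of two extensions carries a section of one to a section of the other producing the same $\sigma$. Hence the class $[\sigma]\in H^2(\Gamma)$ depends only on the equivalence class of the extension, defining a map $\Phi$ from equivalence classes of central extensions to $H^2(\Gamma)$.

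\emph{From cohomology to extensions.} Given a cocycle $\sigma$, I would set $\tilde\Gamma_\sigma:=\Z\times\Gamma$ as a set, with multiplication $(m,a)(n,b):=(m+n+\sigma(a,b),ab)$. Associativity of this operation is literally the identity $\partial^2\sigma=0$; the relations $\sigma(e,b)=\sigma(a,e)=\sigma(e,e)$, which are special cases of $\partial^2\sigma=0$, show $(-\sigma(e,e),e)$ is a two-sided identity, and inverses are written down directly, so $\tilde\Gamma_\sigma$ is a group. The maps $k\mapsto(k-\sigma(e,e),e)$ and $(m,a)\mapsto a$ present it as a central extension of $\Gamma$ by $\Z$, and $(m,a)\mapsto(m-\tau(a),a)$ is an equivalence $\tilde\Gamma_\sigma\to\tilde\Gamma_{\sigma+\partial^1\tau}$, so cohomologous cocycles yield equivalent extensions. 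This gives a well-defined map $\Psi\colon H^2(\Gamma)\to\{\text{equivalence classes of central extensions}\}$.

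\emph{Mutual inverses.} Finally I would verify $\Phi\circ\Psi=\id$ and $\Psi\circ\Phi=\id$. For the first, applying $\Phi$ to $\tilde\Gamma_\sigma$ with the section $a\mapsto(0,a)$ recovers $\sigma$ itself. For the second, given an extension with section $s$ and associated cocycle $\sigma$, the map $(m,a)\mapsto\iota(m)s(a)$ is a bijective homomorphism $\tilde\Gamma_\sigma\to\tilde\Gamma$ intertwining both structure maps, using $s(e)=\iota(\sigma(e,e))$. None of this is conceptually hard; the only genuinely fiddly part is the bookkeeping around the base point — the constant $\sigma(e,e)$ and the identity element of $\tilde\Gamma_\sigma$ — which one may instead avoid by working throughout with normalized cocycles and sections satisfying $s(e)=e$, at the cost of first invoking the standard fact that every class in $H^2(\Gamma)$ is represented by a normalized cocycle. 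Since the statement is quoted verbatim from \cite{coho}, in the paper itself we simply cite it.
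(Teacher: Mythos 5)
Your construction is correct: both directions, the well-definedness checks (change of section gives a coboundary, cohomologous cocycles give equivalent extensions), and the verification that $\Phi$ and $\Psi$ are mutually inverse all go through, including the bookkeeping with $\sigma(e,e)$ and $s(e)=\iota(\sigma(e,e))$. The paper does not prove this statement at all — it simply cites \cite[Theorem IV.3.12]{coho} — and your argument is exactly the standard one given in that reference, so nothing further is needed.
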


The zero element of the cohomology group corresponds to the direct product extension:
$$\begin{tikzcd}
e\arrow{r} & \Z
\arrow{r}{} &
\Z\times\Gamma \arrow{r}{}
& \Gamma\arrow{r}{}& e
\end{tikzcd}$$
with the obvious maps.

Given an explicit central extension we may find a cocycle representative of the corresponding element of $H^2(\Gamma)$ as follows. Pick $\theta$ to be a set theoretic section from $\Gamma$ to $\Tilde{\Gamma}$. Then viewing $\Z$ as a subset of $\Tilde{\Gamma}$ define
$$\sigma(g,h)=\theta(g)\theta(h)\theta(gh)^{-1}\in \Z.$$
By \cite[Equation IV.3.3]{coho} this is a cocycle representative of the cohomology class corresponding to this central extension.

If $[\sigma]\in H^2(\Gamma)$ then the central extension of corresponding to $[\sigma]$ will be denoted
$$\begin{tikzcd}
e\arrow{r}{}&\Z\arrow{r}{\iota_\sigma}&\Gamma_\sigma\arrow{r}{\varphi_\sigma}&\Gamma\arrow{r}{}&e
\end{tikzcd}.$$
When we use the symbols $\iota_\sigma$, $\Gamma_\sigma$, or $\varphi_\sigma$ we are referring to the relevant parts of this diagram.

We will use the following result

\begin{prop}\label{phiker}
The kernel of $\varphi_\sigma^*:H^2(\Gamma)\rightarrow H^2(\Gamma_\sigma)$ is the generated by~$[\sigma]$. 
\end{prop}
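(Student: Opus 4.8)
The plan is to identify $\ker(\varphi_\sigma^*)$ by combining two facts: first that $[\sigma]$ itself pulls back to zero, and second that nothing else does. For the first direction, I would use the explicit cocycle description given just above the statement. Choosing a set-theoretic section $\theta:\Gamma\to\Gamma_\sigma$, the cocycle $\sigma$ is represented by $\theta(g)\theta(h)\theta(gh)^{-1}$ viewed in $\Z$. Pulling back along $\varphi_\sigma$ means composing with $\varphi_\sigma$ in each slot; but one can build a section of $\varphi_\sigma\circ(\text{something})$, or more directly, observe that $\varphi_\sigma^*[\sigma]$ is represented by the cocycle $(g',h')\mapsto\sigma(\varphi_\sigma(g'),\varphi_\sigma(h'))$ on $\Gamma_\sigma$, and this is a coboundary: the function $\beta(g')=$ (the $\Z$-component of $g'\cdot\theta(\varphi_\sigma(g'))^{-1}$, which lands in $\iota_\sigma(\Z)\cong\Z$) has coboundary exactly $\varphi_\sigma^*\sigma$. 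In other words, the extension $\Gamma_{\varphi_\sigma^*\sigma}$ splits because the composite $\Z\to\Gamma_\sigma\xrightarrow{} \Gamma_\sigma$-pullback admits the diagonal-type section coming from $\iota_\sigma$ itself. So $[\sigma]\in\ker(\varphi_\sigma^*)$, hence the subgroup generated by $[\sigma]$ is contained in the kernel.

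For the reverse inclusion, the natural tool is the five-term exact sequence (inflation–restriction) associated to the central extension $e\to\Z\to\Gamma_\sigma\to\Gamma\to e$. With $\Z$-coefficients and trivial action this reads
$$\begin{tikzcd}[column sep=small]
0\arrow{r}& H^1(\Gamma)\arrow{r}& H^1(\Gamma_\sigma)\arrow{r}& \Hom(\Z,\Z)\arrow{r}{d_2}& H^2(\Gamma)\arrow{r}{\varphi_\sigma^*}& H^2(\Gamma_\sigma).
\end{tikzcd}$$
Exactness at $H^2(\Gamma)$ says $\ker(\varphi_\sigma^*)=\im(d_2)$, where $d_2:\Hom(\Z,\Z)\to H^2(\Gamma)$ is the transgression. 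Since $\Hom(\Z,\Z)\cong\Z$ and the transgression of the identity homomorphism $\id_\Z$ is, by the standard identification of $d_2$ with the class classifying the extension, precisely $[\sigma]$, we get $\im(d_2)=\Z\cdot[\sigma]$, the subgroup generated by $[\sigma]$. Combining the two inclusions gives the result.

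The main obstacle — really the only delicate point — is pinning down that the transgression $d_2(\id_\Z)$ equals $[\sigma]$ (up to sign, which is irrelevant for the subgroup generated), rather than some other multiple or a competing class. This is a standard but sign-sensitive computation; I would either cite it from a reference on the Lyndon–Hochschild–Serre spectral sequence for central extensions, or verify it directly by unwinding the definition of $d_2$ on the bar resolution and comparing with the cocycle $\theta(g)\theta(h)\theta(gh)^{-1}$ from the excerpt. An alternative that avoids the spectral sequence entirely: show directly that if $\tau$ is a $2$-cocycle on $\Gamma$ with $\varphi_\sigma^*[\tau]=0$, then choosing a splitting of $\Gamma_{\varphi_\sigma^*\tau}$ over $\Gamma_\sigma$ and pushing it down produces a homomorphism $\Gamma_\sigma\to$ (something) forcing $[\tau]$ to be a multiple of $[\sigma]$; concretely, the pullback $\Gamma_\tau\times_\Gamma\Gamma_\sigma$ splitting over $\Gamma_\sigma$ means $\Gamma_\tau$ is classified by a map factoring through $\Gamma_\sigma^{\mathrm{ab}}$-type data, whence $[\tau]\in\Z[\sigma]$. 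I would present whichever is shortest given the tools already set up, likely the five-term sequence since the adjointness and functoriality machinery is already in place.
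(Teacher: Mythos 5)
Your proof is correct, but it packages the spectral-sequence argument differently from the paper. You use the five-term inflation--restriction exact sequence
$0\to H^1(\Gamma)\to H^1(\Gamma_\sigma)\to \Hom(\Z,\Z)\xrightarrow{d_2} H^2(\Gamma)\xrightarrow{\varphi_\sigma^*} H^2(\Gamma_\sigma)$
and the identification of the transgression $d_2(\id_\Z)$ with $\pm[\sigma]$, whereas the paper quotes the Gysin sequence
$H^0(\Gamma)\xrightarrow{\smile[\sigma]} H^2(\Gamma)\xrightarrow{\varphi_\sigma^*} H^2(\Gamma_\sigma)$.
These are really the same differential $E_2^{0,1}\to E_2^{2,0}$ of the Lyndon--Hochschild--Serre spectral sequence for $1\to\Z\to\Gamma_\sigma\to\Gamma\to1$, so the underlying computation is identical; the difference is which classical packaging you cite. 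The Gysin form makes the cup-product structure explicit, so once you know $H^0(\Gamma)\cong\Z$ is generated by the unit of the cohomology ring, the image being $\Z\cdot[\sigma]$ is immediate with no further identification to verify. Your transgression form is equally standard but does require pinning down $d_2(\id_\Z)=\pm[\sigma]$, which you correctly flag as the sole delicate step; a citation (e.g. the transgression theorem for central extensions) closes that. One minor remark: the first half of your argument, directly exhibiting $\varphi_\sigma^*\sigma$ as the coboundary of $\beta(g')=$ ($\Z$-component of $g'\theta(\varphi_\sigma(g'))^{-1}$), is correct but redundant once the five-term sequence is invoked, since exactness at $H^2(\Gamma)$ already gives both inclusions $\ker(\varphi_\sigma^*)=\im(d_2)$ in one stroke. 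The alternative sketch in your final paragraph is too vague to stand on its own, but you rightly default to the five-term sequence.
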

\begin{proof}
The Gysin sequence is a long exact sequence coming from the Hochschild-Serre spectral sequence with the following form
$$\begin{tikzcd}
\cdots H^n(\Gamma)\arrow{r}{\smile[\sigma]}&H^{n+2}(\Gamma)\arrow{r}{\varphi_\sigma^*}&H^{n+2}(\Gamma_\sigma)\arrow{r}{}&H^{n+1}(\Gamma)\arrow{r}{\smile[\sigma]}&H^{n+3}(\Gamma)\cdots
\end{tikzcd}$$
where $\smile$ is the cup product \cite[Theorem 3, Theorem 4]{gysin}. See \cite[Chapter V.3]{coho} for a full description of the cup product. From exactness of this sequence looking at $n=0$ we get that the kernel of $\varphi_\sigma^*$ is $H^0(\Gamma)\smile[\sigma]$. Since $H^0(\Gamma)$ is generated by the class of the constant 1 cocycle, which is the identity element for the cup product ring, we have that $H^0(\Gamma)\smile[\sigma]$ is precisely the span of $[\sigma]$. 
\end{proof}

\subsection{Torsion-Free Finitely Generated Nilpotent Groups}
Let $\Gamma$ be a torsion-free finitely generated nilpotent group.
\begin{defn}
A {\em Mal'cev basis} for $\Gamma$ is an $m$-tuple of elements, $(a_1,\ldots ,a_m)\in\Gamma^m$ which satisfies the following conditions
\begin{itemize}
    \item For all $g\in\Gamma$, $g$ can be written uniquely as $g=a_1^{x_1}\cdots a_m^{x_m}$ for some $(x_1,\ldots, x_m)\in\Z^m$.  We call this presentation the {\em canonical form} of~$g$.
    \item The subgroups $\Gamma_i=\langle a_i,\ldots, a_m\rangle$ form a central series for $\Gamma$.
\end{itemize}
\end{defn}

Every finitely generated torsion-free nilpotent group has a Mal'cev basis by \cite[Lemma 8.23]{Comp}. When working with a torsion-free finitely generated nilpotent group with a specified Mal'cev basis we will treat elements of the group as $m$-tuples of integers $x=(x_1,\ldots, x_m)$ where the $x_i$ denote the exponents in the canonical form for $x$. 

\begin{defn}\label{hirsch1}
The cardinality of the Mal'cev basis is called the {\em Hirsch length} of the group.
\end{defn}

This is independent of the choice of Mal'cev basis \cite[Exercise 8]{polycyclic}.

In order to extend this definition to finitely generated nilpotent groups with torsion we use the following well-known result.

\begin{prop}\label{torsion}\cite[Lemma~8.2.2]{wordprocessing}
If $\Gamma$ is a nilpotent group, and $T$ is the set of torsion elements, then $T$ is a finite normal subgroup of $\Gamma$, and $\Gamma/T$ is torsion-free.
\end{prop}

\begin{defn}\label{hirsch2}
If $\Gamma$ is a finitely generated nilpotent group, and $T$ is its torsion-subgroup then we define the {\em Hirsch Length} of $\Gamma$ to be the Hirsch length of~$\Gamma/T$, defined in Definition~\ref{hirsch1}.
\end{defn}

This coincides with the definition of Hirsch length defined more generally in~\cite{elementary}. 

\begin{prop}\label{vcyc}
Let $\Gamma$ be a finitely generated nilpotent group with Hirsch length at most 1. Then $\Gamma$ is virtually cyclic.
\end{prop}

\begin{proof}
There must be a torsion-free subgroup $N\le\Gamma$ that is finite-index and torsion-free by \cite[Theorem~2.1]{notesNG}. Since the Hirsch length $\Gamma$ is at most 1, the Hirsch length of $N$ is also at most 1 by \cite[Theorem~1b]{elementary} so either $N\cong\Z$ or $N=\{e\}$, which implies $\Gamma$ is virtually cyclic. 
\end{proof}

We will use the following result. Recall that an element of a torsion-free finitely generated nilpotent group may be represented as an $m$-tuple of integers.

\begin{thm}\cite[Theorem 6.5]{hall}\label{polynomial}
If $\Gamma$ is a torsion-free finitely generated nilpotent group with Hirsch length $m$ and coordinates given by a Mal'cev basis then there are polynomials $q_1,\ldots, q_m\in\Q[x_1,\ldots, x_m,y_1,\ldots, y_m]$ so that
$$xy=(q_1(x,y),\ldots,q_m(x,y)).$$
\end{thm}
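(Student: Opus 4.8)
The plan is to prove the statement by induction on the Hirsch length $m$. When $m\le 1$ the group is trivial or $\Z$ and multiplication is $(x,y)\mapsto x+y$ (or the empty map), which is polynomial. For the inductive step, observe that $\Gamma_m=\langle a_m\rangle$ is an infinite cyclic central subgroup — infinite because $\Gamma$ is torsion-free, central by the Mal'cev basis axioms — so $\bar\Gamma:=\Gamma/\Gamma_m$ is again torsion-free, finitely generated and nilpotent, of Hirsch length $m-1$, and one checks routinely that the images $\bar a_1,\dots,\bar a_{m-1}$ form a Mal'cev basis for it. By the inductive hypothesis there are $\bar p_1,\dots,\bar p_{m-1}\in\Q[x_1,\dots,x_{m-1},y_1,\dots,y_{m-1}]$ computing multiplication in $\bar\Gamma$. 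Since reduction mod $\Gamma_m$ simply deletes the last coordinate, if $xy=(z_1,\dots,z_m)$ then $z_i=\bar p_i(x_1,\dots,x_{m-1},y_1,\dots,y_{m-1})$ for $i\le m-1$, and everything reduces to producing a polynomial formula for the last coordinate $z_m$.

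To reach $z_m$, use centrality of $a_m$ to slide $a_m^{x_m}$ and $a_m^{y_m}$ out of the way:
$$xy=\big(a_1^{x_1}\cdots a_{m-1}^{x_{m-1}}\big)\big(a_1^{y_1}\cdots a_{m-1}^{y_{m-1}}\big)\,a_m^{x_m+y_m}.$$
The element $w:=(a_1^{x_1}\cdots a_{m-1}^{x_{m-1}})(a_1^{y_1}\cdots a_{m-1}^{y_{m-1}})$ has image $(z_1,\dots,z_{m-1})$ in $\bar\Gamma$, so its canonical form in $\Gamma$ is $a_1^{z_1}\cdots a_{m-1}^{z_{m-1}}a_m^{c}$ for a unique integer $c$, whence $z_m=c+x_m+y_m$. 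Thus it suffices to show that $c=c(x_1,\dots,x_{m-1},y_1,\dots,y_{m-1})$ is the restriction to $\Z^{m-1}\times\Z^{m-1}$ of a polynomial with rational coefficients; equivalently, that the $\Z$-valued $2$-cocycle representing the central extension $\Gamma_m\to\Gamma\to\bar\Gamma$ with respect to the canonical section is polynomial.

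I would establish polynomiality of $c$ by the \emph{collection process}, organized as a simultaneous induction. To put $a_1^{x_1}\cdots a_{m-1}^{x_{m-1}}a_1^{y_1}\cdots a_{m-1}^{y_{m-1}}$ into canonical form one repeatedly moves a generator $a_i$ to the left past a later generator $a_j$ ($i<j$): each such move replaces $a_j^{s}a_i^{t}$ by $a_i^{t}a_j^{s}\cdot u$ with $u=[a_j^{s},a_i^{t}]^{-1}\in[\Gamma_i,\Gamma_j]\subseteq\Gamma_{\max(i,j)+1}=\Gamma_{j+1}$, i.e.\ strictly further down the central series; rewriting $u$ in canonical form and iterating, the process terminates because $\Gamma_{m+1}=\{e\}$. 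To see the exponents stay polynomial along the way one carries, as auxiliary inductive statements, the claims that the conjugation maps $g\mapsto a_i^{\pm1}ga_i^{\mp1}$ and the power maps $g\mapsto g^{n}$ are given by polynomials in the coordinates of $g$ (and in $n$) with rational coefficients; these bound and compute the correction terms $u$. The power map is where genuinely non-integer coefficients enter — through binomial coefficients such as $\binom{n}{2}$ — which is exactly why $\Q[\,\cdot\,]$ rather than $\Z[\,\cdot\,]$ appears in the statement. Assembling these, $c$ is polynomial, hence so is $z_m$, and therefore so is $xy$.

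The main obstacle is precisely this last step: making rigorous that the collection process terminates and that \emph{every} intermediate exponent remains an (integer-valued) polynomial with rational coefficients, which forces one to set up carefully the nested inductions on multiplication, conjugation and exponentiation and to invoke the central-series inclusion $[\Gamma_i,\Gamma_j]\subseteq\Gamma_{\max(i,j)+1}$ at each swap. An alternative that avoids the combinatorics is the Lie-theoretic route: realize $\Gamma$ as a lattice in its rational Mal'cev completion $G$, a simply connected nilpotent Lie group over $\Q$; in exponential coordinates on $G$ multiplication is given by the Baker--Campbell--Hausdorff series, which truncates to a polynomial because the Lie algebra is nilpotent; then verify that the transition between exponential coordinates and the Mal'cev-basis coordinates is polynomial over $\Q$ in both directions, and restrict to $\Gamma$.
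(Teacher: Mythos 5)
The paper does not prove this theorem: it is cited directly as \cite[Theorem~6.5]{hall}, so there is no in-paper argument to compare against. Your reduction by induction on the Hirsch length is sound — pulling $a_m$ out of the product using its centrality, appealing to the quotient Mal'cev basis (which is exactly what the paper's Proposition~\ref{quotientB} records), and thereby reducing to showing the correction exponent $c$, i.e.\ the cocycle of the top central extension with respect to the canonical section, is polynomial over $\Q$. The collection-process argument you then sketch for $c$ is precisely Hall's classical proof, and your identification of where rational (non-integer) coefficients enter — via the power map, through binomials like $\binom{n}{2}$ — is the correct diagnosis of why $\Q$ rather than $\Z$ appears. You have also accurately flagged the real burden: making the nested inductions on multiplication, conjugation, and exponentiation terminate and preserve polynomiality is where all the work lies, and your outline defers that to a claim rather than carrying it out; still, this is a faithful sketch of the standard proof rather than a gap in conception. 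The BCH/Mal'cev-completion alternative you mention is likewise a well-trodden and valid route, with the caveat you already note that one must separately check the coordinate change between exponential and Mal'cev coordinates is polynomial in both directions.
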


\begin{prop}\label{liftbasis}\cite[Proposition 4.1]{constructive}
If $\Gamma$ is a torsion-free finitely generated nilpotent group with Mal'cev basis $a_1,\ldots,a_m$ and $[\sigma]\in H^2(\Gamma)$ then a Mal'cev basis, $b_1,\ldots, b_{m+1}$ for $\Gamma_\sigma$ can be found as follows. Let $b_i$ be any lift of $a_i$ for $1\le i\le m$ and let $b_{m+1}=\iota_\sigma(1)$.  
\end{prop}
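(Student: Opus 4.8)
The plan is to verify directly the two defining conditions of a Mal'cev basis for the tuple $(b_1,\ldots,b_{m+1})$, working throughout with the central extension $e\to\Z\xrightarrow{\iota_\sigma}\Gamma_\sigma\xrightarrow{\varphi_\sigma}\Gamma\to e$ and the fact that $\iota_\sigma(\Z)$ is central in $\Gamma_\sigma$. Observe that $\varphi_\sigma(b_i)=a_i$ for $1\le i\le m$, that $\varphi_\sigma(b_{m+1})=e$, and that $b_{m+1}^k=\iota_\sigma(k)$ for every $k\in\Z$. I will also use the standard structural facts attached to the Mal'cev basis $(a_1,\ldots,a_m)$ of $\Gamma$ (with the convention $\Gamma_{m+1}=\{e\}$): for $1\le i\le m$ an element of $\Gamma$ lies in $\Gamma_i=\langle a_i,\ldots,a_m\rangle$ if and only if the first $i-1$ coordinates of its canonical form vanish, and each $\Gamma_i/\Gamma_{i+1}$ is infinite cyclic.

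\emph{Canonical form.} Given $g\in\Gamma_\sigma$, let $(x_1,\ldots,x_m)$ be the canonical form of $\varphi_\sigma(g)$. Then $g\,(b_1^{x_1}\cdots b_m^{x_m})^{-1}\in\ker\varphi_\sigma=\iota_\sigma(\Z)$, so it equals $\iota_\sigma(x_{m+1})=b_{m+1}^{x_{m+1}}$ for a unique $x_{m+1}\in\Z$; since $\iota_\sigma(\Z)$ is central this rearranges to $g=b_1^{x_1}\cdots b_m^{x_m}b_{m+1}^{x_{m+1}}$, so a canonical form exists. For uniqueness, apply $\varphi_\sigma$ to an equality $b_1^{x_1}\cdots b_{m+1}^{x_{m+1}}=b_1^{x_1'}\cdots b_{m+1}^{x_{m+1}'}$: because $\varphi_\sigma(b_{m+1})=e$ this gives $a_1^{x_1}\cdots a_m^{x_m}=a_1^{x_1'}\cdots a_m^{x_m'}$, hence $x_i=x_i'$ for $1\le i\le m$ by uniqueness in $\Gamma$, and then $\iota_\sigma(x_{m+1})=\iota_\sigma(x_{m+1}')$ gives $x_{m+1}=x_{m+1}'$ since $\iota_\sigma$ is injective.

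\emph{Central series.} The key point is the identity $(\Gamma_\sigma)_i:=\langle b_i,\ldots,b_{m+1}\rangle=\varphi_\sigma^{-1}(\Gamma_i)$ for $1\le i\le m+1$ (reading $\varphi_\sigma^{-1}(\Gamma_{m+1})=\iota_\sigma(\Z)=\langle b_{m+1}\rangle$). The inclusion $\subseteq$ is clear since $\varphi_\sigma(b_j)\in\Gamma_i$ for $j\ge i$; for $\supseteq$, write $g\in\varphi_\sigma^{-1}(\Gamma_i)$ in canonical form $g=b_1^{x_1}\cdots b_{m+1}^{x_{m+1}}$ and apply $\varphi_\sigma$ to see that $a_1^{x_1}\cdots a_m^{x_m}\in\Gamma_i$, so $x_1=\cdots=x_{i-1}=0$ and $g=b_i^{x_i}\cdots b_{m+1}^{x_{m+1}}\in(\Gamma_\sigma)_i$. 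Granting this, each $(\Gamma_\sigma)_i$ is normal in $\Gamma_\sigma$ as the preimage of a normal subgroup, and surjectivity of $\varphi_\sigma$ yields $\varphi_\sigma([\Gamma_\sigma,(\Gamma_\sigma)_i])=[\Gamma,\Gamma_i]\subseteq\Gamma_{i+1}$, so $[\Gamma_\sigma,(\Gamma_\sigma)_i]\subseteq\varphi_\sigma^{-1}(\Gamma_{i+1})=(\Gamma_\sigma)_{i+1}$ for $1\le i\le m$, while $[\Gamma_\sigma,(\Gamma_\sigma)_{m+1}]=\{e\}$ is just centrality of $\iota_\sigma(\Z)$. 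Thus $\Gamma_\sigma=(\Gamma_\sigma)_1\supseteq\cdots\supseteq(\Gamma_\sigma)_{m+1}\supseteq\{e\}$ is a central series, which finishes the verification (and incidentally re-proves that $\Gamma_\sigma$ is torsion-free finitely generated nilpotent of Hirsch length $m+1$).

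The only input that is not a formal consequence of the central extension is the pair of structural facts about the Mal'cev basis of $\Gamma$ quoted above — detection of $\Gamma_i$ by the vanishing of canonical coordinates, and $\Gamma_i/\Gamma_{i+1}\cong\Z$ — and this is where I expect the only real care to be needed. However, it is part of the standard theory of Mal'cev bases (provable by induction on the Hirsch length, passing to $\Gamma/\Gamma_2$ with its induced basis) and may simply be cited; everything else is a diagram chase through the extension.
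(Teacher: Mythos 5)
Your proof is correct, and since the paper simply cites this result from \cite{constructive} without reproducing an argument, there is no in-text proof to compare against. Your approach is the natural one: existence and uniqueness of the canonical form follow by pushing along $\varphi_\sigma$ and using centrality of $\iota_\sigma(\Z)$, and the central-series condition follows from the identity $\langle b_i,\ldots,b_{m+1}\rangle=\varphi_\sigma^{-1}(\Gamma_i)$ together with $\varphi_\sigma([\Gamma_\sigma,(\Gamma_\sigma)_i])=[\Gamma,\Gamma_i]\subseteq\Gamma_{i+1}$; the one ingredient you defer to citation (that membership in $\Gamma_i$ is detected by the vanishing of the first $i-1$ canonical coordinates) is indeed standard Mal'cev-basis theory and can also be extracted by iterating the paper's Proposition~\ref{quotientB}.
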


\begin{defn}
Let $\Gamma$ be a finitely generated, torsion-free, nilpotent group. If $a_1,\ldots, a_m$ is a Mal'cev basis for $\Gamma$ then $\alpha:\Gamma\rightarrow\Z$ defined by $\alpha(a_1^{x_1}\cdots a_m^{x_m})=x_1$ is called the {\em canonical homomorphism with respect to $a_1,\ldots, a_m$}.
\end{defn}

\begin{prop}\label{quotientB}
Let $\Gamma$ be a torsion-free finitely generated nilpotent group with Mal'cev basis $a_1,\ldots, a_m$ and let $q:\Gamma\rightarrow\Gamma/\langle a_m\rangle$ be the quotient map. Then $q(a_1),\ldots, q(a_{m-1})$ is a Mal'cev basis for $\Gamma/\langle a_m\rangle$. If $\alpha:\Gamma/\langle a_m\rangle\rightarrow\Z$ is the canonical homomorphism with respect to $q(a_1),\ldots,q(a_{m-1})$ then $\alpha\circ q:\Gamma\rightarrow\Z$ is the canonical homomorphism with respect to $a_1,\ldots, a_m$.
\end{prop}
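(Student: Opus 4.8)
The plan is to verify, in order, the two bullet points in the definition of a Mal'cev basis for the quotient $\bar\Gamma := \Gamma/\langle a_m\rangle$, and then the claim about canonical homomorphisms. First I would record the preliminary observation that $\langle a_m\rangle=\Gamma_m$ is central in $\Gamma$: since the $\Gamma_i$ form a central series, $[\Gamma,\Gamma_m]\subseteq\Gamma_{m+1}=\{e\}$. In particular $\langle a_m\rangle$ is normal, so $\bar\Gamma$ is a genuine quotient group, and it is finitely generated and nilpotent because $\Gamma$ is. I will write $q:\Gamma\to\bar\Gamma$ for the quotient map and abbreviate $\bar g=q(g)$.

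For the first bullet point (unique canonical forms), existence is easy: given $\bar g\in\bar\Gamma$, choose a lift $g\in\Gamma$ and write its canonical form $g=a_1^{x_1}\cdots a_m^{x_m}$; applying $q$ and using $\bar a_m=e$ gives $\bar g=\bar a_1^{x_1}\cdots\bar a_{m-1}^{x_{m-1}}$. For uniqueness, suppose $\bar a_1^{x_1}\cdots\bar a_{m-1}^{x_{m-1}}=\bar a_1^{y_1}\cdots\bar a_{m-1}^{y_{m-1}}$. Since $\ker q=\langle a_m\rangle$, lifting this relation back to $\Gamma$ produces an integer $k$ with $a_1^{x_1}\cdots a_{m-1}^{x_{m-1}}=a_1^{y_1}\cdots a_{m-1}^{y_{m-1}}a_m^{k}$. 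Both sides are now expressed in canonical form for $\Gamma$ (the left side has $a_m$-exponent $0$), so uniqueness of the canonical form in $\Gamma$ forces $x_i=y_i$ for $1\le i\le m-1$ (and incidentally $k=0$). This is the one step that needs a little care — the key point being to recognize the lifted relation as an equality of two $\Gamma$-canonical forms — but it is short.

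For the second bullet point, set $\bar\Gamma_i:=q(\Gamma_i)$. Since $q$ is surjective, $\bar\Gamma_i=\langle\bar a_i,\ldots,\bar a_m\rangle=\langle\bar a_i,\ldots,\bar a_{m-1}\rangle$ for $i\le m-1$, and $\bar\Gamma_m=q(\langle a_m\rangle)=\{e\}$, so these are exactly the subgroups the definition requires. The chain $\bar\Gamma=\bar\Gamma_1\supseteq\bar\Gamma_2\supseteq\cdots\supseteq\bar\Gamma_{m-1}\supseteq\bar\Gamma_m=\{e\}$ is obtained by applying $q$ to the corresponding chain in $\Gamma$, and the central series condition transports directly: $[\bar\Gamma,\bar\Gamma_i]=[q(\Gamma),q(\Gamma_i)]=q([\Gamma,\Gamma_i])\subseteq q(\Gamma_{i+1})=\bar\Gamma_{i+1}$, again using surjectivity of $q$. (If one also wants $\bar\Gamma$ to be torsion-free, this is automatic: a group possessing such a formal Mal'cev basis is poly-$\Z$, hence torsion-free.) This establishes that $\bar a_1,\ldots,\bar a_{m-1}$ is a Mal'cev basis for $\bar\Gamma$.

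Finally, for the statement about canonical homomorphisms (which presupposes $m\ge 2$, so that the quotient has a nonempty basis and $\alpha$ genuinely lands in $\Z$), note that $\alpha\circ q$ is a homomorphism because $\alpha$ and $q$ are. Evaluating on a general element $g=a_1^{x_1}\cdots a_m^{x_m}$ in canonical form, the existence computation above shows $q(g)=\bar a_1^{x_1}\cdots\bar a_{m-1}^{x_{m-1}}$ is the canonical form of $q(g)$ in $\bar\Gamma$, whence $\alpha(q(g))=x_1$, which is exactly the value of the canonical homomorphism of $\Gamma$ with respect to $a_1,\ldots,a_m$ on $g$. Since the two homomorphisms agree on every element, they are equal. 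The only genuine obstacle in the whole argument is the uniqueness part of the first bullet point, and even that reduces immediately to uniqueness of canonical forms in $\Gamma$.
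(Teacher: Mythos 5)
Your proposal is correct and follows essentially the same route as the paper: lift a representative, read off the candidate canonical form, reduce uniqueness to uniqueness of canonical forms in $\Gamma$, check the central series by pushing commutators through $q$, and evaluate $\alpha\circ q$ on a canonical form. You add a useful preliminary (centrality of $\langle a_m\rangle$, hence normality) and you spell out the uniqueness step — recognizing the lifted relation as an equality of two $\Gamma$-canonical forms — more explicitly than the paper does, and your phrasing $[\bar\Gamma,\bar\Gamma_i]=q([\Gamma,\Gamma_i])\subseteq q(\Gamma_{i+1})$ cleans up a slightly garbled line in the original; but these are presentation refinements, not a different argument.
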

\begin{proof}
Call $\bar{g}=q(g)$. If $\bar{g}\in\Gamma/\langle a_m\rangle$ we have $g=a_1^{x_1}\cdots a_m^{x_m}$ so $\bar{g}=\bar{a}_1^{x_1}\cdots\bar{a}_{m-1}^{x_{m-1}}$. To show that this is unique suppose that
$$\bar{a}_1^{x_1}\cdots\bar{a}_{m-1}^{x_{m-1}}=\bar{a}_1^{y_1}\cdots\bar{a}_{m-1}^{y_{m-1}}$$
thus
$${a}_1^{x_1}\cdots{a}_{m-1}^{x_{m-1}}={a}_1^{y_1}\cdots{a}_{m-1}^{y_{m-1}}\cdot a_m^{k}$$
so it follows that for $x_i=y_i$ for each $i$. Let $\Gamma_i=\langle a_i,\ldots, a_m\rangle$ and let $\bar{\Gamma}_i=\langle\bar{a}_i,\ldots, \bar a_{m-1}\rangle=\Gamma_i/\langle a_m\rangle$. Note that 
$$[\bar{\Gamma},\bar{\Gamma}_i]=[\Gamma,\Gamma_i]/\langle a_m\rangle\subseteq\Gamma_{i+1}/\langle a_m\rangle=\bar{\Gamma}_{i+1}.
$$
To show the last claim
$$\alpha(q(a_1^{x_1}\cdots a_m^{x_m}))=\alpha(\bar{a}_1^{x_1}\cdots\bar{a}_{m-1}^{x_{m-1}})=x_1.$$
\end{proof}
\subsection{The Winding Number Argument}

The classical example due to Voiculescu in \cite{voiculescuM} for an asymptotic representation (in either operator or Frobenius norm) of $\Z^2$ that is not perturbable (in either norm) to a sequence of genuine representations comes in the form:
$$\rho_n(a,b)=u_n^av_n^b$$
where $u_n$ and $v_n$ are $n\times n$ matrices such that
$$u_n=\begin{bmatrix}
0&0&\cdots&0&0&1\\
1&0&\cdots&0&0&0\\
0&1&\cdots&0&0&0\\
\vdots&\vdots&\ddots&\vdots&\vdots&\vdots\\
0&0&\cdots&1&0&0\\
0&0&\cdots&0&1&0
\end{bmatrix}\mbox{ and }
v_n=\begin{bmatrix}
\exp\left(\frac{2\pi i}{n}\right)&0&0&\cdots&0\\
0&\exp\left(\frac{4\pi i}{n}\right)&0&\cdots&0\\
0&0&\exp\left(\frac{6\pi i}{n}\right)&\cdots&0\\
\vdots&\vdots&\vdots&\ddots&\vdots\\
0&0&0&\cdots&1
\end{bmatrix}.
$$
It can be computed that $u_nv_nu_n^{-1}v_n^{-1}=\exp\left(\frac{-2\pi i}{n}\right)\id_n$. Thus we see that for $1\le p<\infty$
\begin{align*}
||\rho_n(x)\rho_n(y)-\rho_n(x+y)||_p&=||\rho_n(x)\rho_n(y)\rho_n(x+y)^{-1}-\id_{\C^n}||_p\\
&=||u_n^{x_1}v_n^{x_2}u_n^{y_1}v_n^{y_2}v_n^{-x_2-y_2}u_n^{-x_1-y_1}-\id_{\C^n}||_p\\
&=||u_n^{x_1}(v_n^{x_2}u_n^{y_1}v_n^{-x_2}u_n^{-y_1})u_n^{-x_1}-\id_{\C^n}||_p\\
&=\left|\left|\left(\exp\left(\frac{2\pi i x_2y_1}{n}\right)-1\right)\id_{\C^n}\right|\right|_p\\
&\le2\pi |x_2y_1|n^{\frac1p-1}.
\end{align*}
Voiculescu shows that $u_n$ and $v_n$ cannot be arbitrarily close to a pair of unitaries that genuinely commute with each other, in the operator norm. Because the operator norm is smaller than any other unnormalized Schatten $p$-norm they cannot be close to commuting matrices in these norms either.

The argument we will use to show non-perturbability in this paper has its roots in the ``winding number argument'' first discovered by Kazhdan in~\cite{KvoiculescuM} and independently used by Exel and Loring in \cite{ELVoiculescuM} that $u_n$ and $v_n$ are far from any pair of matrices that commute. A more general statement of this argument can be found in \cite[Theorem 3.9]{stab} or in~\cite{quasireps}. We will use a more homological version of the result described below.
\begin{defn}\label{pairing}
If
$$c=\sum_{j=1}^N x_j[a_j|b_j]\in C_2(\Gamma)$$
and $\rho:\Gamma\rightarrow\Gl_n(\C)$ so that for all $j\in\{1,\ldots,N\}$
$$||\rho(a_jb_j)\rho(a_j)^{-1}\rho(b_j)^{-1}-\id_{\C^n}||<1$$
we define
$$\langle \rho,c\rangle=\frac{1}{2\pi i}\sum_{j=1}^Nx_j\Tr(\log(\rho(a_jb_j)\rho(b_j)^{-1}\rho(a_j)^{-1}))$$
where $\log$ is defined as a power series centered at 1.
\end{defn}

If $\partial c=0$ we have that $\langle\rho,c\rangle\in\Z$  \cite[Proposition 3.4]{constructive}. The version of the winding number argument we are using is as follows.

\begin{thm}\label{obstruction}\cite[Theorem 3.7]{constructive}
If $\rho_0$ is a (not necessarily unitary) representation of $\Gamma$, $\rho_1$ is a function from~$\Gamma$ to~$U(n)$, 
$$c=\sum_{j=1}^N x_j[a_j|b_j]$$
is a 2-cycle on $\Gamma$, and $$||\rho_1(g)-\rho_0(g)||<\frac{1}{24}$$
for all $g\in\{a_j,b_j,a_jb_j\}_{j=1}^N$ then $\rho_1$ is multiplicative enough for $\langle\rho_1,c\rangle$ to be defined and $\langle\rho_1,c\rangle=0$.
\end{thm}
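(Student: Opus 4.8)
The plan is to reduce the statement to the classical winding-number estimate for Voiculescu's unitaries by a direct perturbation argument. Since $\rho_0$ is a genuine representation, it is exactly multiplicative, so $\rho_0(g)\rho_0(h)\rho_0(gh)^{-1}=\id$ for all $g,h$; in particular $\langle\rho_0,c\rangle=0$ trivially (the summands all vanish). The content is therefore to show that if $\rho_1$ is uniformly close to $\rho_0$ on the finite set $S=\{a_i,b_i,a_ib_i\}_{i=1}^N$, then $\langle\rho_1,c\rangle=\langle\rho_0,c\rangle$. First I would record that $\|\rho_1(g)-\rho_0(g)\|<1/24$ for $g\in S$ forces $\|\rho_1(a_ib_i)\rho_1(b_i)^{-1}\rho_1(a_i)^{-1}-\id\|<1$: indeed each factor $\rho_1(\cdot)$ is within $1/24$ of the corresponding unitary $\rho_0(\cdot)$ (note $\rho_0$ being a unitary representation keeps $\|\rho_0(g)^{-1}\|=1$, and we may also need $\|\rho_1(g)^{-1}\|$ bounded, which follows since $\rho_1(g)$ is unitary), and a telescoping estimate on the product of three such near-unitary factors keeps the total distance from $\id$ well below $1$, so $\Tr\log(\cdot)$ in Definition~\ref{pairing} is defined.

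The key step is a homotopy/continuity argument. Consider the straight-line path $\rho_t(g)=(1-t)\rho_0(g)+t\rho_1(g)$ for $g\in S$, or more robustly a path through $U(n)$ obtained by polar-decomposing this (since each $\rho_t(g)$ stays within $1/24<1$ of the unitary $\rho_0(g)$ it is invertible, so its unitary part is well-defined and depends continuously on $t$). Along this path the quantity
$$t\mapsto \frac{1}{2\pi i}\sum_{j=1}^N x_j\Tr\bigl(\log(\rho_t(a_jb_j)\rho_t(b_j)^{-1}\rho_t(a_j)^{-1})\bigr)$$
is continuous, provided the argument of each $\log$ stays within the domain of the principal branch, i.e. within distance $1$ of $\id$ — which the $1/24$ bound guarantees uniformly in $t$ by the same telescoping estimate as above. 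Since $\partial c=0$, the cited fact (\cite[Proposition 3.4]{constructive}) says this quantity is an integer for every $t$; a continuous integer-valued function on $[0,1]$ is constant, so its value at $t=1$ equals its value at $t=0$, which is $\langle\rho_0,c\rangle=0$. Hence $\langle\rho_1,c\rangle=0$.

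The main obstacle is making the ``$\log$ stays in the principal domain along the whole path'' claim quantitatively airtight: one must check that $\|\rho_t(a_jb_j)\rho_t(b_j)^{-1}\rho_t(a_j)^{-1}-\id\|<1$ for all $t\in[0,1]$ simultaneously, not just at the endpoints, and this is where the specific constant $1/24$ is spent — writing $\rho_t(g)=\rho_0(g)+E_t(g)$ with $\|E_t(g)\|\le\|E_1(g)\|<1/24$, expanding the triple product $\rho_t(a_jb_j)\rho_t(b_j)^{-1}\rho_t(a_j)^{-1}$ and using $\rho_0(a_jb_j)\rho_0(b_j)^{-1}\rho_0(a_j)^{-1}=\id$, one bounds the difference from $\id$ by a sum of error terms each controlled by powers of $1/24$ times norms of $\rho_0$-values and their inverses (all equal to $1$); a crude count gives a bound comfortably below $1$. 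The only subtlety is controlling $\|\rho_t(g)^{-1}\|$ uniformly, which follows from $\|\rho_t(g)-\rho_0(g)\|<1/24$ and a Neumann series, giving $\|\rho_t(g)^{-1}\|\le (1-1/24)^{-1}<2$. Everything else is bookkeeping; the conceptual core is simply ``integer-valued + continuous $\Rightarrow$ constant.''
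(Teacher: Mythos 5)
This theorem is cited from \cite{constructive} rather than proved in the present paper, so there is no in-paper proof to compare against; I will evaluate your argument on its own terms. Your overall strategy --- interpolate from $\rho_0$ to $\rho_1$, observe that $t\mapsto\langle\rho_t,c\rangle$ is integer-valued (via the cited fact for $\partial c=0$) and continuous, hence constant, and that its value at $t=0$ is $0$ because $\rho_0$ is an honest representation --- is exactly the standard winding-number argument and is the right approach.

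However, you have swapped the roles of $\rho_0$ and $\rho_1$ in the hypotheses: the theorem says $\rho_0$ is the \emph{not necessarily unitary} representation and $\rho_1$ is the function taking values in $U(n)$. Your parenthetical ``$\rho_0$ being a unitary representation keeps $\|\rho_0(g)^{-1}\|=1$'' is therefore unjustified, as is the later claim that the ``norms of $\rho_0$-values and their inverses'' are ``all equal to $1$.'' This is a genuine error in the writeup, though a repairable one: the correct anchoring is at $\rho_1$. Since $\rho_1(g)$ is unitary and $\|\rho_t(g)-\rho_1(g)\|=(1-t)\|\rho_0(g)-\rho_1(g)\|<1/24$, writing $\rho_t(g)=\rho_1(g)\bigl(\id-(1-t)\rho_1(g)^{-1}(\rho_1(g)-\rho_0(g))\bigr)$ and applying the Neumann series gives $\|\rho_t(g)^{-1}\|\le(1-1/24)^{-1}$ uniformly in $t$, and also $\|\rho_0(g)^{-1}\|\le(1-1/24)^{-1}$ and $\|\rho_0(g)\|\le 1+1/24$. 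With those bounds your telescoping estimate does indeed keep $\|\rho_t(a_jb_j)\rho_t(b_j)^{-1}\rho_t(a_j)^{-1}-\id\|$ well below $1$ for all $t$. One further simplification: the polar-decomposition detour is unnecessary, since Definition~\ref{pairing} only asks that $\rho$ take values in $\Gl_n(\C)$, not $U(n)$, so the straight-line path (which stays in $\Gl_n(\C)$ by the Neumann bound above) can be used directly.
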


\subsection{$p$-Stability Results}

The following result is essentially \cite[Lemma~2.13]{lattice}. They state the result only for Frobenius norm, but their argument holds for other unnormalized Schatten $p$-norms, including operator norm. We include a proof for the sake of completeness and to highlight the construction of our asymptotic representation.

\begin{lemma}\label{quotient}
Suppose that $\Gamma$ is a countable discrete group and $N$ is a finite normal subgroup. Let $q:\Gamma\rightarrow\Gamma/N$ be the quotient map and let $1\le p\le\infty$. If~$\varphi_n$ is a $p$-asymptotic representation of $\Gamma/N$ then $\varphi_n\circ q$ is a $p$-asymptotic representation of $\Gamma$; if $\varphi_n\circ q$ is $p$-perturbable to a genuine representation so is $\varphi_n$. In particular, if $\Gamma$ is $p$-stable then $\Gamma/N$ is $p$-stable.
\end{lemma}

\begin{proof}
Clearly $\varphi_n\circ q$ is a $p$-asymptotic homomorphism on~$\Gamma$. Suppose now that $\varphi\circ q$ is $p$-perturbable to a genuine representation. Without loss of generality we may assume that $\varphi_n(e)=1$. By assumption, there is a sequence of group homomorphisms $\psi_n:\Gamma\rightarrow U(k_n)$ so that for all $x\in\Gamma$ we have $||\psi_n(x)-\varphi_n(q(x))||_p\rightarrow0$. Now pick $\varepsilon>0$ so that for each nontrivial irreducible representation $\alpha$ of~$N$ we have that $\max_{x\in N}||\alpha(x)-1||_p>\varepsilon$. Then for large enough $n$, we have that for all $x\in N$
$$||\psi_n(x)-1||_p=||\psi_n(x)-\varphi_n(q(x))||_p<\varepsilon.$$
If $\alpha$ is any irreducible subrepresentation of $\psi_n|_N$, for all $x\in N$ we have
$$||\alpha(x)-1||_p\le||\psi_n(x)-1||_p<\varepsilon.$$
It follows that every irreducible subrepresentation of $\psi_n|_N$ is trivial so $N\le\ker(\psi_n)$. Thus $\psi_n$ factors through $\Gamma/N$ and $\varphi_n$ is $p$-perturbable to a genuine representation.
\end{proof}

\begin{lemma}\label{vfree}
Let $\Gamma$ be a finitely generated virtually free group and $1\le p\le\infty$. Then $\Gamma$ is $p$-stable.
\end{lemma}
\begin{proof}
By~\cite[Remark 5.2]{nonaproximable} if the 2-cohomology of $\Gamma$ with coefficients in a class of Banach spaces with a $\Gamma$ action vanishes then $\Gamma$ is $p$-stable. Since the rational cohomological dimension of virtually free groups is 1 it follows from this that they must be $p$-stable.
\end{proof}

\section{Skinny Cohomology Classes}
\begin{defn}\label{skinny}
Let $[\sigma]\in H^2(\Gamma)$ and let $\alpha\in\Hom(\Gamma,\Z)$. Let $\kappa_\alpha$ be the inclusion map from $\ker(\alpha)$ to $\Gamma$. We call $[\sigma]$ {\em skinny with respect to~$\alpha$} if $\kappa_\alpha^*([\sigma])=0$.
\end{defn}

\begin{prop}\label{skinnys}
Let $\Gamma$ be a discrete group, $\alpha\in\Hom(\Gamma,\Z)$, and $[\sigma]\in H^2(\Gamma)$. The following are equivalent:
\begin{enumerate}
    \item $[\sigma]$ has a normalized cocycle representative $\sigma$ with the property that $\sigma(x,y)$ depends only on $x$ and $\alpha(y)$.
    \item $[\sigma]$ is skinny with respect to $\alpha$.
    \item There is a commutative diagram as follows
    $$\begin{tikzcd}
e\arrow{r} & \Z \arrow{r}{} 
\arrow{d}{\id_\Z} &\Z\times\ker(\alpha)\arrow{r}{}
\arrow{d}{\psi} 
&\ker(\alpha) \arrow{r}{}\arrow{d}{\kappa_\alpha}& e
\\
e\arrow{r} & \Z\arrow{r}{\iota_\sigma}
&
\Gamma_\sigma\arrow{r}{\varphi_\sigma}
&\Gamma \arrow{r}{}& e
\end{tikzcd}$$
with the obvious maps on the top row.
\end{enumerate}
If $\Gamma$ is a finitely-generated, torsion-free, nilpotent group and $\alpha$ is the canonical homomorphism with respect to a Mal'cev basis then the representative in (1) can be taken to be polynomial with respect to this basis. That is to say there is $q\in\Q[x_1,\ldots,x_m,y_1]$ so that $\sigma(x,y)=q(x_1,\ldots,x_m,y_1)$ for all $x,y\in\Gamma$.
\end{prop}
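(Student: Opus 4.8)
\emph{Proof proposal.} The plan is to establish the three conditions equivalent by the cycle $(1)\Rightarrow(2)\Rightarrow(3)\Rightarrow(1)$ and then to upgrade the section used in $(3)\Rightarrow(1)$ to one with polynomial coordinates in the nilpotent setting. We may assume $\alpha$ is surjective: if $\alpha=0$ then $\ker(\alpha)=\Gamma$, $\kappa_\alpha=\id_\Gamma$, and all three conditions reduce to $[\sigma]=0$; and if $\alpha\neq 0$ we may replace the codomain $\Z$ by $\im(\alpha)$ without changing $\ker(\alpha)$ or the statement ``$\sigma(x,y)$ depends only on $x$ and $\alpha(y)$''. The implication $(1)\Rightarrow(2)$ is immediate: a normalized representative whose value depends only on $x$ and $\alpha(y)$ satisfies $\sigma(x,y)=\sigma(x,e)=0$ whenever $\alpha(y)=0$, so $\kappa_\alpha^*\sigma$ is the zero cocycle and $\kappa_\alpha^*[\sigma]=0$. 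For $(2)\Rightarrow(3)$: the central extension of $\ker(\alpha)$ classified by $\kappa_\alpha^*[\sigma]$ is realized inside $\Gamma_\sigma$ as $\varphi_\sigma^{-1}(\ker(\alpha))$, with structure maps $\iota_\sigma$ and $\varphi_\sigma|$; since $\kappa_\alpha^*[\sigma]=0$ this extension is equivalent to $\Z\times\ker(\alpha)$, and composing the equivalence isomorphism with the inclusion $\varphi_\sigma^{-1}(\ker(\alpha))\hookrightarrow\Gamma_\sigma$ gives the map $\psi$, both squares commuting by the definition of equivalence of extensions.

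The heart of the argument is $(3)\Rightarrow(1)$. Set $N=\ker(\alpha)$, fix $t\in\Gamma$ with $\alpha(t)=1$ and a lift $\tilde t\in\Gamma_\sigma$ of $t$, and put $s:=\psi(0,\,\cdot\,)\colon N\to\Gamma_\sigma$, so that $s$ is a homomorphism with $\varphi_\sigma\circ s=\kappa_\alpha$, with $\iota_\sigma(n)\,s(k)=\psi(n,k)$, and with $\iota_\sigma(\Z)=\ker(\varphi_\sigma)$ central in $\Gamma_\sigma$. For $g\in\Gamma$ let $k_g:=t^{-\alpha(g)}g\in N$, which depends on $g$ alone and gives $g=t^{\alpha(g)}k_g$. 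I would then use the set-theoretic section $\theta(g):=\tilde t^{\,\alpha(g)}\,s(k_g)$; it satisfies $\varphi_\sigma\circ\theta=\id_\Gamma$ and $\theta(e)=e$, so $\sigma(g,h):=\iota_\sigma^{-1}\!\bigl(\theta(g)\theta(h)\theta(gh)^{-1}\bigr)$ is a normalized representative of $[\sigma]$. Using the identity $k_h k_{gh}^{-1}=t^{-\alpha(h)}k_g^{-1}t^{\alpha(h)}$ (a direct computation from the definition of $k_{(\cdot)}$), one rewrites $\theta(g)\theta(h)\theta(gh)^{-1}=\tilde t^{\,\alpha(g)}s(k_g)\,w\,\tilde t^{-\alpha(g)}$ with $w:=\tilde t^{\,\alpha(h)}s\bigl(t^{-\alpha(h)}k_g^{-1}t^{\alpha(h)}\bigr)\tilde t^{-\alpha(h)}$; since $\varphi_\sigma(w)=k_g^{-1}=\varphi_\sigma(s(k_g^{-1}))$, we get $w=\iota_\sigma(d)\,s(k_g^{-1})$ for a unique $d=d(\alpha(h),g)\in\Z$. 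Substituting back, pulling the central element $\iota_\sigma(d)$ to the front, cancelling $s(k_g)s(k_g^{-1})=e$, and absorbing the conjugation by $\tilde t^{\pm\alpha(g)}$, one is left with $\theta(g)\theta(h)\theta(gh)^{-1}=\iota_\sigma(d)$. Hence $\sigma(g,h)=d(\alpha(h),g)$ depends only on $g$ and $\alpha(h)$, which is (1).

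For the polynomial refinement, specialize to $t=a_1$ (allowed, since $\alpha(a_1)=1$), $\tilde t=b_1$, and the Mal'cev basis $b_1,\dots,b_{m+1}$ of $\Gamma_\sigma$ with $b_{m+1}=\iota_\sigma(1)$. In these coordinates $\varphi_\sigma$ deletes the last coordinate, $\iota_\sigma(n)=(0,\dots,0,n)$, and $k_g$ has coordinates $(0,x_2,\dots,x_m)$, obtained by deleting the first coordinate of $g=(x_1,\dots,x_m)$. Thus $\theta(g)=b_1^{x_1}\,s(a_2)^{x_2}\cdots s(a_m)^{x_m}$, whose $\Gamma_\sigma$-coordinates are polynomial in $(x_1,\dots,x_m)$, because integer powers of a fixed element, products, and inverses are each given by polynomials in Mal'cev coordinates (Theorem~\ref{polynomial} together with the standard polynomiality of the power maps $k\mapsto g^k$ in finitely generated torsion-free nilpotent groups). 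Since $gh$ is itself polynomial in $(x,y)$ by Theorem~\ref{polynomial}, the last coordinate $\sigma(g,h)$ of $\theta(g)\theta(h)\theta(gh)^{-1}$ is a polynomial in $(x_1,\dots,x_m,y_1,\dots,y_m)$; as it depends only on $(x_1,\dots,x_m,y_1)$, substituting $y_2=\dots=y_m=0$ exhibits it as a polynomial $p\in\Q[x_1,\dots,x_m,y_1]$ with $\sigma(x,y)=p(x_1,\dots,x_m,y_1)$.

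I expect the main obstacle to be the computation inside $\Gamma_\sigma$ in $(3)\Rightarrow(1)$: one must choose the section $\theta$ in precisely the right way relative to $\psi$ and then exploit the centrality of $\iota_\sigma(\Z)$ so that all the ``$s$-terms'' cancel and only the central contribution, which manifestly factors through $\alpha(h)$, survives; a generic section fails to have this property. By comparison the reductions, the pullback identification in $(2)\Rightarrow(3)$, and the polynomial bookkeeping at the end are routine once this is in place.
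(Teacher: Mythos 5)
Your proposal is correct and follows essentially the same route as the paper: the same implication cycle $(1)\Rightarrow(2)\Rightarrow(3)\Rightarrow(1)$, the identical set-theoretic section $\theta(g)=\tilde t^{\,\alpha(g)}\psi(0,k_g)$ in $(3)\Rightarrow(1)$ with the central element $\iota_\sigma(\Z)$ pulled out to show dependence only on $g$ and $\alpha(h)$, and the same appeal to a Mal'cev basis of $\Gamma_\sigma$ together with Theorem~\ref{polynomial} for the polynomial refinement. The only (welcome) additions are your explicit reduction to $\alpha$ surjective and the slightly more explicit isolation of the central term $\iota_\sigma(d)$, neither of which changes the substance of the argument.
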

Note that the cocycle will always map integer inputs to integer outputs, but in general the coefficients may be non-integral rationals.
\begin{proof}
(1)$\Longrightarrow$(2) Let $\sigma$ be a representative as in (1). To show (2) it suffices to show that for $x,y\in\ker(\alpha)$ we have that $\sigma(x,y)=0$. Note that since $y\in\ker(\alpha)$ and $\sigma$ depends only on $x$ and $\alpha(y)$ we have $\sigma(x,y)=\sigma(x,e)=0$ since $\sigma$ is normalized.

\noindent
(2)$\Longrightarrow$(3) The central extension corresponding to $\kappa_\alpha^*([\sigma])$ must fit into a commutative diagram as in the top row by \cite[Chapter IV.3 Exercise 1]{coho}. Since $\kappa_\alpha^*([\sigma])$ is zero it must correspond to the trivial central extension, so the commutative diagram as written must exist.

\noindent (3)$\Longrightarrow$(1) Let $a_1\in\alpha^{-1}(1)$. In the torsion-free, finitely generated, nilpotent case we can pick $a_1$ to be the first element of the Mal'cev basis. Any element in $\Gamma$ may uniquely be written as $a_1^{x_1}y$ with $y\in\ker(\alpha)$. Let $b_1\in\Gamma_\sigma$ be a lift of $a_1$. Define a set-theoretic section $\theta$ as follows:
$$a_1^{x_1}y\mapsto b_1^{x_1}\psi(0,y).$$
Commutativity of the right square in the commutative diagram shows that $\varphi_\sigma(\psi(0,y))=y$. Combining this and the fact that $\varphi_\sigma(b_1)=a_1$ we get that~$\theta$ is in fact a set theoretic section of $\varphi_\sigma$. Then we define $\sigma'$ by the formula $\theta(g)\theta(h)=\theta(gh)\iota_\sigma(\sigma'(g,h))$. This is a cocycle representative of $[\sigma]$ by \cite[Equation IV.3.3]{coho}. Let $g=a_1^{x_1}y$ and $h=a_1^{x_2}z$ with $y,z\in\ker(\alpha)$. Now we compute that
\begin{align*}
\theta(a_1^{x_1} y)\theta(a_1^{x_2} z)&\theta(a_1^{x_1}ya_1^{x_2}z)^{-1}\\
&=b_1^{x_1}\psi(0,y)b_1^{x_2}\psi(0,z)\theta(a_1^{x_1+x_2}(a_1^{-x_2}ya_1^{x_2})z)^{-1}\\
&=b_1^{x_1}\psi(0,y)b_1^{x_2}\psi(0,z)\psi(0,z)^{-1}\psi(0,a_1^{-x_2}ya_1^{x_2})^{-1}b_1^{-x_1-x_2}\\
&=b_1^{x_1}\psi(0,y)b_1^{x_2}\psi(0,a_1^{-x_2}ya_1^{x_2})^{-1}b_1^{-x_1-x_2}.
\end{align*}
The last expression clearly does not depend on $z$. Thus $\sigma'(g,h)$ only depends on $g$ and $\alpha(h)=x_2$. Note that $\sigma'$ is normalized because $\theta(e)=e$ \cite[Chapter IV equation 3.4]{coho}. 

All that remains to be shown is that $\sigma'$ is polynomial with respect to the Mal'cev basis in the torsion-free finitely-generated nilpotent case. To show this we let $a_1,\ldots,a_m$ be a Mal'cev basis for $\Gamma$ and compute that
\begin{align*}
\theta(a_1^{x_1}\cdot a_2^{x_2}\cdots a_m^{x_m})&=b_1^{x_1}\psi(0,a_2^{x_2}\cdots a_m^{x_m})\\
&=\theta(a_1)^{x_1}\theta(a_2)^{x_2}\cdots\theta(a_m)^{x_m}.
\end{align*}
Note that $\theta(a_1),\ldots,\theta(a_m),\iota_\sigma(1)$ makes a Mal'cev basis for $\Gamma_\sigma$ by Proposition~\ref{liftbasis}. Using the Mal'cev bases mentioned above for both $\Gamma$ and $\Gamma_\sigma$ we may express elements of both groups as $m$-tuples and $m+1$-tuples of integers. In this notation $\theta(x_1,\ldots,x_m)=(x_1,\ldots,x_m,0)$. By Theorem~\ref{polynomial} multiplication in $\Gamma_\sigma$ is given by $x\cdot y=(q_1(x,y),\ldots,q_{m+1}(x,y))$ with $q_i\in\Q[x_1,\ldots,x_{m+1},y_1,\ldots,y_{m+1}]$. Then
\begin{align*}
\theta(xy)\iota_\sigma(\sigma'(x,y))&=\theta(x)\theta(y)\\
&=(q_1(\theta(x),\theta(y)),\ldots ,q_{m+1}(\theta(x),\theta(y))).
\end{align*}
The equality in the last coordinate gives us that 
$$\sigma'(x,y)=q_{m+1}(x_1,\ldots,x_m,0,y_1,\ldots,y_m,0).$$
\end{proof}

\begin{lemma}\label{pullback}
Let $\varphi:\Gamma\rightarrow\Lambda$ be a group homomorphism. If $[\sigma]\in H^2(\Lambda)$ is skinny with respect to $\alpha\in\Hom(\Lambda,\Z)$ then $\varphi^*([\sigma])$ is skinny with respect to $\alpha\circ\varphi$.
\end{lemma}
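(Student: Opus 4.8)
The statement is essentially a formal consequence of functoriality of $H^2(-)$ once one identifies the relevant kernels. The key observation is that $\ker(\alpha\circ\varphi)=\varphi^{-1}(\ker\alpha)$, so $\varphi$ restricts to a group homomorphism $\varphi':\ker(\alpha\circ\varphi)\to\ker(\alpha)$, and the square
$$\begin{tikzcd}
\ker(\alpha\circ\varphi)\arrow{r}{\kappa_{\alpha\circ\varphi}}\arrow{d}{\varphi'} & \Gamma\arrow{d}{\varphi}\\
\ker(\alpha)\arrow{r}{\kappa_\alpha} & \Lambda
\end{tikzcd}$$
commutes, i.e. $\varphi\circ\kappa_{\alpha\circ\varphi}=\kappa_\alpha\circ\varphi'$ as maps of groups. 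That commutativity is immediate from the definitions: an element $g\in\ker(\alpha\circ\varphi)$ is sent to $\varphi(g)$ on both composites, and $\varphi(g)$ lies in $\ker\alpha$ precisely because $\alpha(\varphi(g))=0$.

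First I would apply the contravariant functor $H^2(-)$ to this square. Using that $f^*$ is functorial (as recalled in the Background section, $(f\circ g)^*=g^*\circ f^*$), the commuting square yields
$$\kappa_{\alpha\circ\varphi}^*\circ\varphi^*=(\varphi\circ\kappa_{\alpha\circ\varphi})^*=(\kappa_\alpha\circ\varphi')^*=\varphi'^*\circ\kappa_\alpha^*$$
as maps $H^2(\Lambda)\to H^2(\ker(\alpha\circ\varphi))$. Then I would simply evaluate at $[\sigma]$: since $[\sigma]$ is skinny with respect to $\alpha$ we have $\kappa_\alpha^*([\sigma])=0$, hence
$$\kappa_{\alpha\circ\varphi}^*\bigl(\varphi^*([\sigma])\bigr)=\varphi'^*\bigl(\kappa_\alpha^*([\sigma])\bigr)=\varphi'^*(0)=0,$$
which is exactly the assertion that $\varphi^*([\sigma])$ is skinny with respect to $\alpha\circ\varphi$.

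There is no real obstacle here; the only point requiring a line of care is the identification $\ker(\alpha\circ\varphi)=\varphi^{-1}(\ker\alpha)$ together with the verification that the square above is well-defined and commutes (in particular that $\varphi'$ genuinely lands in $\ker\alpha$). As an alternative, one could instead argue via characterization (1) of Proposition~\ref{skinnys}: take a normalized representative $\sigma$ of $[\sigma]$ with $\sigma(x,y)$ depending only on $x$ and $\alpha(y)$; then $\varphi^*(\sigma)(g,h)=\sigma(\varphi(g),\varphi(h))$ is still normalized (if $g=e$ or $h=e$ then the corresponding entry $\varphi(g)$ or $\varphi(h)$ is $e$) and depends only on $g$ and $\alpha(\varphi(h))=(\alpha\circ\varphi)(h)$, so Proposition~\ref{skinnys} gives that $\varphi^*([\sigma])$ is skinny with respect to $\alpha\circ\varphi$. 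I would present the functoriality argument as the main proof since it is shorter and does not rely on passing to an explicit cocycle.
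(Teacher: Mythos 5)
Your proof is correct and follows exactly the same route as the paper: observe that $\varphi$ restricts to a map $\ker(\alpha\circ\varphi)\to\ker(\alpha)$ making the square with the inclusion maps commute, then apply contravariant functoriality of $H^2$ and use $\kappa_\alpha^*([\sigma])=0$. The paper's proof is a two-line version of the same argument (with a small notational slip, writing $\iota_{\alpha\circ\varphi}$ where $\kappa_{\alpha\circ\varphi}$ is meant); your write-up just spells out the commuting square and the identification $\ker(\alpha\circ\varphi)=\varphi^{-1}(\ker\alpha)$ more explicitly.
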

\begin{proof}
Note that $\varphi\circ\kappa_{\alpha\circ\varphi}=\kappa_{\alpha}\circ\varphi|_{\ker(\alpha\circ\varphi)}$. Thus
$$\kappa_{\alpha\circ\varphi}^*(\varphi^*([\sigma]))=\varphi|_{\ker(\alpha\circ\varphi)}^*(\kappa_\alpha^*([\sigma]))=0.$$
\end{proof}

\begin{lemma}\label{hard}
Suppose that $\Gamma$ is a discrete group and $[\sigma]\in H^2(\Gamma)$ is skinny with respect to $\alpha$, a surjective element of $\Hom(\Gamma,\Z)$. There is a non-torsion cohomology class $[\omega]\in H^2(\Gamma_\sigma)$ so that $[\omega]$ is skinny with respect to $\alpha\circ\varphi_\sigma$. Moreover, if $a$ is any element of $\Gamma_\sigma$ so that $\alpha(\varphi_\sigma(a))=1$ and $c_k$ is the 2-cycle on $\Gamma_\sigma$ defined by
$$[a|\iota_\sigma(k)]-[\iota_\sigma(k)|a]$$
then $\langle\omega,c_k\rangle=k$.
\end{lemma}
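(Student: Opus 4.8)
The plan is to build $[\omega]$ as the pullback along $\varphi_\sigma$ of a suitable class on $\Gamma$ and to detect its non-torsion-ness by pairing it against the cycles $c_k$. Concretely, I would first observe that since $[\sigma]$ is itself skinny with respect to $\alpha$, Lemma~\ref{pullback} guarantees that $\varphi_\sigma^*([\sigma])$ is skinny with respect to $\alpha\circ\varphi_\sigma$. So the natural candidate is $[\omega] := \varphi_\sigma^*([\sigma]) \in H^2(\Gamma_\sigma)$. It remains to show that this class is non-torsion, and for that I would compute $\langle\omega, c_k\rangle$ directly and show it equals $k$; this simultaneously establishes the "moreover" clause and, since the pairing $H^2(\Gamma_\sigma)\times H_2(\Gamma_\sigma)\to\Z$ is bilinear, forces $[\omega]$ to have infinite order (as $n[\omega]$ pairs to $nk\neq 0$ for $n\neq 0$, choosing $k=1$).

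To carry out the pairing computation I would work with an explicit normalized cocycle representative. Using Proposition~\ref{skinnys}, pick a normalized cocycle $\sigma$ representing $[\sigma]$ with $\sigma(x,y)$ depending only on $x$ and $\alpha(y)$, and realize $\Gamma_\sigma$ as $\Gamma\times\Z$ as a set with multiplication $(g,s)(h,t) = (gh, s+t+\sigma(g,h))$, where $\iota_\sigma(k) = (e,k)$ and $\varphi_\sigma(g,s)=g$. Then $\omega = \varphi_\sigma^*\sigma$ is the cocycle $\omega((g,s),(h,t)) = \sigma(g,h)$ on $\Gamma_\sigma$. Now pick $a\in\Gamma_\sigma$ with $\alpha(\varphi_\sigma(a))=1$, say $a=(a_0,s_0)$ with $\alpha(a_0)=1$. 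The cycle $c_k = [a|\iota_\sigma(k)] - [\iota_\sigma(k)|a]$ pairs against $\omega$ to give
$$\langle\omega,c_k\rangle = \omega(a,\iota_\sigma(k)) - \omega(\iota_\sigma(k),a) = \sigma(a_0, e) - \sigma(e, a_0).$$
Since $\sigma$ is normalized, $\sigma(a_0,e)=0$ and $\sigma(e,a_0)=0$, which gives $0$, not $k$ — so this naive choice of cocycle representative is wrong, and the actual argument must be more subtle.

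The resolution, and the main obstacle, is that $\langle\omega,c_k\rangle$ cannot be computed from the cocycle $\varphi_\sigma^*\sigma$ alone because $c_k$ is only a cycle in $\Gamma_\sigma$, not the image of a cycle from $\Gamma$ (indeed $\varphi_\sigma$ kills $\iota_\sigma(k)$, so $(\varphi_\sigma)_*(c_k)=0$, consistent with the above vanishing for that particular representative). The correct approach is to choose a cocycle representative $\omega$ of $\varphi_\sigma^*([\sigma])$ that is \emph{not} simply pulled back — using instead a set-theoretic section $\theta$ of $\varphi_\sigma$ that is genuinely non-multiplicative on the central $\Z$, i.e., one where $\theta(\varphi_\sigma(\iota_\sigma(k)))=\theta(e)=e$ while $\iota_\sigma(k)\neq e$, so that the coboundary correction picks up the central data. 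More precisely I would verify that $[\sigma]$ and $\varphi_\sigma^*([\sigma])$ differ from a representative built via the Gysin/transgression machinery of Proposition~\ref{phiker}: the connecting map $H^1(\Gamma)\to H^2(\Gamma_\sigma)$ in that Gysin sequence sends $[\alpha]\in H^1(\Gamma)=\Hom(\Gamma,\Z)$ to a class whose value on $c_k$ is exactly $k$ (this is the standard "the Euler class pairs with the fiber to give $\pm 1$" computation). So the honest candidate is $[\omega] = $ the transgression of $[\alpha]$, or a combination of $\varphi_\sigma^*([\sigma])$ with it; one then checks skinniness of that class via Lemma~\ref{pullback} together with the fact that the transgression class restricts trivially to $\ker(\alpha\circ\varphi_\sigma)$ because $\alpha\circ\varphi_\sigma$ vanishes there. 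I expect the bulk of the work to be setting up the right section $\theta$, writing down $\omega(g,h)=$ (the $\Z$-component of $\theta(g)\theta(h)\theta(gh)^{-1}$) explicitly, and then doing the short but delicate cocycle bookkeeping that yields $\omega(a,\iota_\sigma(k)) - \omega(\iota_\sigma(k),a) = k$.
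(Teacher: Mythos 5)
Your first move --- trying $[\omega]=\varphi_\sigma^*([\sigma])$ and then rejecting it --- is sound, and it is worth noting you could have rejected it even faster: by Proposition~\ref{phiker}, $[\sigma]$ generates the kernel of $\varphi_\sigma^*$, so $\varphi_\sigma^*([\sigma])$ is literally the zero class, not merely a class that happens to pair to zero with $c_k$.

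The second half of your proposal, however, has a genuine gap. You invoke ``the connecting map $H^1(\Gamma)\to H^2(\Gamma_\sigma)$ in the Gysin sequence,'' but no such map exists: the relevant segment of the Gysin sequence (Proposition~\ref{phiker}) is
$$H^0(\Gamma)\xrightarrow{\smile[\sigma]}H^2(\Gamma)\xrightarrow{\varphi_\sigma^*}H^2(\Gamma_\sigma)\xrightarrow{\pi_*}H^1(\Gamma)\xrightarrow{\smile[\sigma]}H^3(\Gamma),$$
so $H^1(\Gamma)$ appears only as the \emph{target} of the integration-along-the-fiber map $\pi_*$, never as the source of a map into $H^2(\Gamma_\sigma)$. (The transgression in the Hochschild--Serre spectral sequence for this extension goes $H^1(\Z)\cong\Z\to H^2(\Gamma)$, sending a generator to $[\sigma]$; that is also not what you need.) If you reverse the direction and instead look for a \emph{preimage} of $[\alpha]$ under $\pi_*$ --- which is the version of your idea that does match the geometry, since the preimage of a loop under a circle bundle is a torus, which is exactly the class of $c_1$ --- then exactness of the Gysin sequence says such a preimage exists if and only if $[\alpha]\smile[\sigma]=0$ in $H^3(\Gamma)$. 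You do not establish this, and it is not an obvious consequence of skinniness, so this is where the argument breaks. The skinniness check for the resulting class is also asserted rather than proved: Lemma~\ref{pullback} applies to pullbacks along homomorphisms, not to $\pi_*$-preimages.

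For contrast, the paper avoids all of this by a direct construction. Using condition (3) of Proposition~\ref{skinnys} it identifies $\Gamma_\sigma\cong(\Z\times\ker\alpha)\rtimes_\gamma\Z$, then builds the explicit central extension
$$e\to\Z\to(\Z\times\Z\times\ker\alpha)\rtimes_\eta\Z\to(\Z\times\ker\alpha)\rtimes_\gamma\Z\to e,\qquad \eta(x,y,z)=(x+y,\gamma(y,z)),$$
where the central $\Z$ is the first coordinate and $\varphi$ forgets it. Skinniness is read off from a commutative diagram of the same shape as in Proposition~\ref{skinnys}(3), and the pairing $\langle\omega,c_k\rangle=k$ is a short computation using the section $\theta(y,z,w)=(0,y,z,w)$ and the fact that $\gamma(k,e)=(k,e)$. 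If you want to pursue the Gysin-theoretic route, the missing ingredient you would have to supply is a proof that $[\alpha]\smile[\sigma]$ vanishes whenever $[\sigma]$ is skinny with respect to $\alpha$; absent that, the explicit construction is the safer path.
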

\begin{proof}
By condition (3) of Proposition~\ref{skinnys} we have a commutative diagram as follows:
$$\begin{tikzcd}
e\arrow{r} & \Z \arrow{r}{} 
\arrow{d}{\id_\Z} &\Z\times\ker(\alpha)\arrow{r}{}
\arrow{d}{\psi} 
&\ker(\alpha) \arrow{r}{}\arrow{d}{\kappa_\alpha}& e
\\
e\arrow{r} & \Z\arrow{r}{\iota_\sigma}
&
\Gamma_\sigma\arrow{r}{\varphi_\sigma}
&\Gamma \arrow{r}{}& e.
\end{tikzcd}$$
By the five lemma $\psi$ is an injection. First note that
$$\alpha(\varphi_\sigma(\psi(x,y)))=\alpha(\kappa_\alpha(y))=0.$$
Next suppose that $\alpha(\varphi_\sigma(x))=0$. Then $\varphi_\sigma(x)\in\ker(\alpha)$ so let $y=\psi(0,\varphi_\sigma(x))$. Since $\varphi_\sigma(y)=\varphi_\sigma(x)$ we have that $xy^{-1}=\iota_\sigma(n)$. Then we see that
$$\psi(n,\varphi_\sigma(x))=\psi(n,0)\psi(0,\varphi_\sigma(x))=\iota_\sigma(n)y=x.$$
Thus we have a short exact sequence
$$\begin{tikzcd}
e\arrow{r}{}&\Z\times\ker(\alpha)\arrow{r}{\psi}&\Gamma_\sigma\arrow{r}{\alpha\circ\varphi_\sigma}&\Z\arrow{r}{}&e
\end{tikzcd}.$$
Because $\Z$ is a free group, this sequence must split. Hence we have
$$\Gamma_\sigma\cong(\Z\times\ker(\alpha))\rtimes_\gamma\Z$$
where $\gamma$ is some element of $\Aut(\Z\times\ker(\alpha))$. One explicit isomorphism is given by $(x,y,z)\mapsto\psi(x,y)a^z$. 
Define $\eta\in\Aut(\Z\times\Z\times\ker(\alpha))$ by the formula
$$\eta(x,y,z)=(x+y,\gamma(y,z)).$$
To show that this is a homomorphism we compute 
\begin{align*}
\eta(x_1,y_1,z_1)\cdot\eta(x_2,y_2,z_2)&=(x_1+y_1,\gamma(y_1,z_1))\cdot(x_2+y_2,\gamma(y_2,z_2))\\
&=(x_1+x_2+y_1+y_2,\gamma(y_1+y_2,z_1z_2))\\
&=\eta(x_1+x_2,y_1+y_2,z_1z_2)\\
&=\eta((x_1,y_1,z_1)\cdot(x_2,y_2,z_2)).
\end{align*}
Moreover the inverse of $\eta$ is given by
$$(x,y,z)\mapsto(x-w,\gamma^{-1}(y,z))$$
where $w$ is the first component of $\gamma^{-1}(y,z)$. Next we introduce a short exact sequence
$$\begin{tikzcd}
e\arrow{r}{}&\Z\arrow{r}{\iota}&(\Z\times\Z\times\ker(\alpha))\rtimes_\eta\Z\arrow{r}{\varphi}&(\Z\times\ker(\alpha))\rtimes_\gamma\Z\arrow{r}{}&e
\end{tikzcd}.$$
Here $\iota$ is defined by $x\mapsto(x,0,e,0)$ and $\varphi$ is defined by $(x,y,z,w)\mapsto(y,z,w)$. The fact that $\iota$ is a homomorphism follows immediately from the definition of multiplication in the semi-direct product. To show that $\varphi$ is a homomorphism note that 
\begin{align*}
\varphi((x_1,y_1,z_1,w_1)\cdot(x_2,y_2,z_2,w_2))&=\varphi((x_1,y_1,z_1)\cdot\eta^{w_1}(x_2,y_2,z_2),w_1+w_2)\\
&=((y_1,z_1)\cdot\gamma^{w_1}(y_2,z_2),w_1+w_2)\\
&=(y_1,z_1,w_1)\cdot(y_2,z_2,w_2)\\
&=\varphi(x_1,y_1,z_1,w_1)\cdot\varphi(x_2,y_2,z_2,w_2).
\end{align*}
Clearly this is an exact sequence from the definition of the maps. To show that $\iota(1)$ is central it suffices to compute that $\eta(1,0,e)=(1,0,e)$.

To show that the cohomology class of this extension is skinny we make the diagram
$$\begin{tikzcd}
e\arrow{r} & \Z \arrow{r}{} 
\arrow{d}{\id_\Z} &\Z\times\Z\times\ker(\alpha)\arrow{r}{}
\arrow{d}{} 
&\Z\times\ker(\alpha) \arrow{r}{}\arrow{d}{}& e
\\
e\arrow{r}{}&\Z\arrow{r}{\iota}&(\Z\times\Z\times\ker(\alpha))\rtimes_\eta\Z\arrow{r}{\varphi}&(\Z\times\ker(\alpha))\rtimes_\gamma\Z\arrow{r}{}&e.
\end{tikzcd}$$
Note that the image of $\Z\times\ker(\alpha)$ in $(\Z\times\ker(\alpha))\rtimes_\gamma\Z$ is precisely the kernel of the map $(y,z,w)\mapsto w$ from $(\Z\times\ker(\alpha))\rtimes_\gamma\Z$. Thus this extension is skinny with respect to this map by Proposition~\ref{skinnys}. Combining this with the isomorphism to $\Gamma_\sigma$ this map corresponds to $\alpha\circ\varphi_\sigma$.

Define a set-theoretic section $\theta$ so that $(y,z,w)\mapsto(0,y,z,w)$. Then a representative of $\omega\in[\omega]$ may be defined by the formula $\iota(\omega(x,y))=\theta(x)\theta(y)\theta(xy)^{-1}$. Note that we may have picked the isomorphism to the semi-direct product so that $a$ corresponds to $(0,e,1)$. Then we have that
\begin{align*}
\iota(\langle\omega, c_k\rangle)&=\theta(a)\theta(\iota_\sigma(k))\theta(a\iota_\sigma(k))^{-1}\big(\theta(\iota_\sigma(k))\theta(a)\theta(\iota_\sigma(k)a)^{-1}\big)^{-1}\\
&=\theta(a)\theta(\iota_\sigma(k))\theta(a)^{-1}\theta(\iota_\sigma(k))^{-1}\\
&=(0,0,e,1)\cdot(0,k,e,0)\cdot(0,0,e,1)^{-1}\cdot(0,k,e,0)^{-1}\\
&=(\eta(0,k,e),1)\cdot(0,0,e,-1)\cdot(0,-k,e,0)\\
&=(k,\gamma(k,e),1)\cdot(0,0,e,-1)\cdot(0,-k,e,0)\\
&=(k,\gamma(k,e),0)\cdot(0,-k,e,0).
\end{align*}
Because $\iota_\sigma(k)$ corresponds to $(k,e,0)\in(\Z\times\ker(\alpha))\rtimes_\gamma\Z$, and $\iota_\sigma(k)$ is central in $\Gamma_\sigma$ we must have that $\gamma(k,e)=(k,e)$. Thus we have
\begin{align*}
\iota(\langle\omega, c_k\rangle)&=(k,k,e,0)\cdot(0,-k,e,0)\\
&=\iota(k).
\end{align*}
The fact that $[\omega]$ is non-torsion follows from the fact that this pairing is nonzero.
\end{proof}

\begin{lemma}\label{kmap}
Let $\Gamma$ be a discrete group, let $[\sigma]\in H^2(\Gamma)$ and let $k\in\Z$. There is a commutative diagram as follows
$$\begin{tikzcd}
e\arrow{r} & \Z \arrow{r}{\iota_\sigma} 
\arrow{d}{\bullet k} &\Gamma_\sigma\arrow{r}{\varphi_\sigma}
\arrow{d}{\psi} 
&\Gamma\arrow{r}{}\arrow{d}{\id_\Gamma}& e
\\
e\arrow{r} & \Z\arrow{r}{\iota_{k\sigma}}
&
\Gamma_{k\sigma}\arrow{r}{\varphi_{k\sigma}}
&\Gamma \arrow{r}{}& e.
\end{tikzcd}$$
\begin{proof}
The map $\bullet k$ from $\Z$ to $\Z$ induces a map from $H^2(\Gamma)\rightarrow H^2(\Gamma)$. Checking the definition of the bifunctor it is clear that this map must be multiplication by $k$ so it takes $[\sigma]$ to $k[\sigma]$. By the central extension view of bifunctoriality \cite[Chapter IV.3 Exercise 1]{coho} it must take the class corresponding to the unique extension of $\Gamma$ fitting into the bottom row of the commutative diagram in the statement of the Lemma. 
\end{proof}
\end{lemma}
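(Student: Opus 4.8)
The plan is to trade the diagram for the dictionary between $H^2(\Gamma;\Z)$ and central extensions, combined with the (elementary) functoriality of $H^2$ in the coefficient variable. I will spell out the concrete model since it makes every square transparent, and then indicate the abstract one-liner that the cited machinery provides.

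First I would fix a \emph{normalized} $2$-cocycle $\sigma$ representing $[\sigma]$ and use the standard model of $\Gamma_\sigma$: as a set it is $\Z\times\Gamma$, with product $(m,g)\cdot(n,h)=(m+n+\sigma(g,h),gh)$, with $\iota_\sigma(n)=(n,e)$ and $\varphi_\sigma(m,g)=g$; normalization is what makes $\iota_\sigma(n)$ central. Since $\partial^2(k\sigma)=k\,\partial^2\sigma=0$, the cochain $k\sigma$ is again a normalized $2$-cocycle and it represents $k[\sigma]$, so the identical recipe yields a model of the extension class $[k\sigma]$, hence (up to equivalence of extensions) of $\Gamma_{k\sigma}$, with product $(m,g)\cdot(n,h)=(m+n+k\sigma(g,h),gh)$.

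Next I would define the middle vertical map $\Phi\colon\Gamma_\sigma\to\Gamma_{k\sigma}$ by $\Phi(m,g)=(km,g)$ and check the three compatibilities. It is a homomorphism because $\Phi((m,g)(n,h))=(k(m+n+\sigma(g,h)),gh)=(km+kn+k\sigma(g,h),gh)=\Phi(m,g)\Phi(n,h)$. The left square commutes because $\Phi(\iota_\sigma(n))=\Phi(n,e)=(kn,e)=\iota_{k\sigma}(kn)$, i.e. $\Phi\circ\iota_\sigma=\iota_{k\sigma}\circ(\bullet k)$. The right square commutes because $\varphi_{k\sigma}(\Phi(m,g))=g=\varphi_\sigma(m,g)$, i.e. $\varphi_{k\sigma}\circ\Phi=\id_\Gamma\circ\varphi_\sigma$. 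Transporting along the chosen equivalences of extensions gives exactly the diagram in the statement.

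Alternatively — the route suggested by the abstract framework — one observes that the coefficient homomorphism $\bullet k\colon\Z\to\Z$ induces on $H^2(\Gamma;-)$ the map ``multiplication by $k$'' (this is the single place where one must unwind the definition of the bifunctor), so it carries $[\sigma]$ to $k[\sigma]$; by the central-extension description of this coefficient functoriality \cite[Chapter IV.3 Exercise 1]{coho} the pushout of $\Gamma_\sigma$ along $\bullet k$ is a central extension of $\Gamma$ by $\Z$ with class $k[\sigma]=[k\sigma]$, equipped with the required morphism of extensions, and is therefore equivalent to $\Gamma_{k\sigma}$. The only point needing care in either approach is convention-matching — confirming that $\bullet k$ really acts as multiplication by $k$ on $H^2(\Gamma;\Z)$ and that the equivalences of extensions can be chosen compatibly — after which it is a direct verification; I expect no genuine obstacle.
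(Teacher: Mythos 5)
Your proposal is correct, and it offers two routes. The second (``abstract'') route is essentially the paper's argument: the coefficient map $\bullet k\colon\Z\to\Z$ induces multiplication by $k$ on $H^2(\Gamma;\Z)$, and the central-extension reading of this coefficient functoriality \cite[Chapter IV.3 Exercise 1]{coho} produces exactly the required morphism of extensions. (You even implicitly catch the paper's typo, where ``multiplication by $n$'' should read ``multiplication by $k$.'') Your first route, however, is a genuinely different and more elementary argument: you instantiate $\Gamma_\sigma$ and $\Gamma_{k\sigma}$ as the standard twisted products $\Z\times\Gamma$ built from a normalized cocycle $\sigma$ and from $k\sigma$, write down the explicit map $\Phi(m,g)=(km,g)$, and verify by hand that it is a homomorphism and that both squares commute. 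The only small thing to keep in mind there is that $\Gamma_\sigma$ in the paper is specified only up to equivalence of extensions, so the concrete model is connected to $\Gamma_\sigma$ by an equivalence; since an equivalence of extensions by definition intertwines the $\iota$'s and $\varphi$'s, conjugating $\Phi$ by those equivalences preserves commutativity of both squares — you note this in passing and it is routine. The payoff of the concrete route is that it is entirely self-contained and does not rely on the cited exercise about coefficient bifunctoriality; the cost is a bit more bookkeeping. The abstract route is shorter but outsources the work to the reference, exactly as the paper does.
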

\begin{thm}\label{main1}
Let $\Gamma$ be a torsion-free, finitely generated nilpotent group with Hirsch length at least 2 and let $\alpha\in\Hom(\Gamma,\Z)$ be the canonical homomorphism with respect to some Mal'cev basis of $\Gamma$. Then there is a cohomology class $[\sigma]\in H^2(\Gamma)$ that is skinny with respect to $\alpha$ and non-torsion.
\end{thm}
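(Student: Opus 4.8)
The plan is to prove the statement by induction on the Hirsch length $m$ of $\Gamma$, using at each stage the central quotient $\Lambda=\Gamma/\langle a_m\rangle$. Since $\langle a_m\rangle=\Gamma_m$ is central and infinite cyclic, $\Gamma$ sits in a central extension $e\to\Z\to\Gamma\to\Lambda\to e$, so $\Gamma\cong\Lambda_{\sigma_0}$ for a class $[\sigma_0]\in H^2(\Lambda)$, with $\varphi_{\sigma_0}$ the quotient map; by Proposition~\ref{quotientB}, $\Lambda$ has Hirsch length $m-1$ and its canonical homomorphism $\bar\alpha$ is surjective with $\alpha=\bar\alpha\circ\varphi_{\sigma_0}$. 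Writing $\kappa\colon\ker\bar\alpha\hookrightarrow\Lambda$ for the inclusion, I would split the argument according to whether $\kappa^*[\sigma_0]\in H^2(\ker\bar\alpha)$ is torsion.

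If $\kappa^*[\sigma_0]$ is torsion, choose $N\ge1$ with $\kappa^*(N[\sigma_0])=0$, so that $N[\sigma_0]$ is skinny with respect to $\bar\alpha$. I would apply Lemma~\ref{hard} to the triple $(\Lambda,N[\sigma_0],\bar\alpha)$ to obtain a non-torsion $[\omega]\in H^2(\Lambda_{N\sigma_0})$ that is skinny with respect to $\bar\alpha\circ\varphi_{N\sigma_0}$, together with a $2$-cycle $c_N=[a\,|\,\iota_{N\sigma_0}(N)]-[\iota_{N\sigma_0}(N)\,|\,a]$ with $\langle\omega,c_N\rangle=N$, valid for any $a$ with $\bar\alpha(\varphi_{N\sigma_0}(a))=1$. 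Lemma~\ref{kmap} (with $k=N$) supplies a homomorphism $f\colon\Gamma=\Lambda_{\sigma_0}\to\Lambda_{N\sigma_0}$ with $f\circ\iota_{\sigma_0}=\iota_{N\sigma_0}\circ(\bullet N)$ and $\varphi_{N\sigma_0}\circ f=\varphi_{\sigma_0}$. Then $f^*[\omega]\in H^2(\Gamma)$ is skinny with respect to $\bar\alpha\circ\varphi_{N\sigma_0}\circ f=\alpha$ by Lemma~\ref{pullback}, and it is non-torsion: picking $a'\in\Gamma$ with $\alpha(a')=1$ and setting $a=f(a')$, one has $\iota_{N\sigma_0}(N)=f(\iota_{\sigma_0}(1))$, so $c_N=f_*c'$ for the $2$-cycle $c'=[a'\,|\,\iota_{\sigma_0}(1)]-[\iota_{\sigma_0}(1)\,|\,a']$ of $\Gamma$ (a cycle because $\iota_{\sigma_0}(1)$ is central), whence $\langle f^*\omega,c'\rangle=\langle\omega,c_N\rangle=N\neq0$. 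This case invokes no inductive hypothesis.

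If instead $\kappa^*[\sigma_0]$ is non-torsion, then $H^2(\ker\bar\alpha)\ne0$, so $\ker\bar\alpha$ is neither trivial nor infinite cyclic and hence has Hirsch length $\ge2$; consequently $\Lambda$ has Hirsch length $\ge3$, and the inductive hypothesis furnishes a non-torsion $[\tau]\in H^2(\Lambda)$ skinny with respect to $\bar\alpha$. Then $\varphi_{\sigma_0}^*[\tau]$ is skinny with respect to $\alpha$ by Lemma~\ref{pullback}, and it stays non-torsion: if $k\,\varphi_{\sigma_0}^*[\tau]=0$ with $k\ne0$, then $k[\tau]$ lies in the kernel of $\varphi_{\sigma_0}^*$, which by Proposition~\ref{phiker} is generated by $[\sigma_0]$, so $k[\tau]=j[\sigma_0]$; applying $\kappa^*$ and using skinniness of $[\tau]$ gives $j\,\kappa^*[\sigma_0]=0$, forcing $j=0$ (as $\kappa^*[\sigma_0]$ is non-torsion) and then $k[\tau]=0$, contradicting that $[\tau]$ is non-torsion.

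The induction is well founded because the second case occurs only when $\Lambda$ has Hirsch length $\ge3$ and recurses to Hirsch length $m-1$, whereas for $\Gamma$ of Hirsch length $2$ or $3$ the group $\ker\bar\alpha$ is trivial or infinite cyclic, so $H^2(\ker\bar\alpha)=0$ and we land in the first, self-contained case. I expect the main obstacle to be exactly the reason one cannot simply quote Lemma~\ref{hard} for $\Gamma$ itself: the class $[\sigma_0]$ classifying $\Gamma$ over $\Lambda$ need not be skinny with respect to $\bar\alpha$ — for instance when $\Gamma\cong\Z\times(\text{integer Heisenberg group})$ the restriction $\kappa^*[\sigma_0]$ generates $H^2(\Z^2)$. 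The first case copes with this when $\kappa^*[\sigma_0]$ is merely torsion, by passing to the multiple $N[\sigma_0]$ and transporting the Lemma~\ref{hard} class back along the map of Lemma~\ref{kmap}; the second case handles the genuinely non-skinny situation by constructing the class downstairs on $\Lambda$ and pulling it back, with Proposition~\ref{phiker} controlling exactly when the pullback could collapse to a torsion class.
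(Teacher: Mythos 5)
Your proof is correct, and it rests on the same skeleton as the paper's: induction on the Hirsch length via the central extension $e\to\Z\to\Gamma\to\Lambda\to e$ with $\Lambda=\Gamma/\langle a_m\rangle$, together with Proposition~\ref{quotientB}, Lemmas~\ref{pullback}, \ref{hard}, \ref{kmap}, and Proposition~\ref{phiker}. The genuine difference is in how the case split is organized, and it changes where the work lands. The paper splits on whether $\varphi_{\sigma_0}^*$ sends the skinny non-torsion class supplied by the inductive hypothesis on $\Lambda$ to a torsion class; both of its cases therefore consume the inductive hypothesis, and in its hard case the skinniness of a multiple of the extension class is deduced only indirectly from the relation $k[\sigma_0]=j[\sigma]$ inside $\ker\varphi_{\sigma_0}^*$. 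You split instead on whether the restriction $\kappa^*[\sigma_0]$ to $\ker\bar\alpha$ is torsion. This makes your Lemma~\ref{hard} case entirely self-contained --- no inductive hypothesis is needed there, and it automatically absorbs the base cases $m=2,3$ since $H^2$ of the trivial group and of $\Z$ vanish --- at the price of an extra step in the other case: you must show that pulling the inductively obtained class $[\tau]$ back along $\varphi_{\sigma_0}$ preserves non-torsionness. Your argument for that step ($k[\tau]=j[\sigma_0]$ by Proposition~\ref{phiker}, then $j\kappa^*[\sigma_0]=0$ by skinniness of $[\tau]$, then $j=0$ because $\kappa^*[\sigma_0]$ is non-torsion in this case, hence $k[\tau]=0$, a contradiction) is correct, and it is precisely the step the paper sidesteps by choosing its case condition to be ``the pullback is already non-torsion.'' Both arguments are complete; yours trades a slightly longer inductive case for a cleaner isolation of where the induction is actually used, and for an explicit identification (via the torsion of $\kappa^*[\sigma_0]$) of when the self-contained construction suffices.
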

\begin{proof}
We will proceed by induction on the Hirsch length. For the base case the only torsion-free nilpotent group with Hirsch length 2 (up to isomorphism) is~$\Z^2$. The two elements of the Mal'cev basis must span $\Z^2$ we may assume without loss of generality that our Mal'cev basis is just the standard generators. Then the cohomology class of $\sigma(x,y)=x_2y_1$ is clearly skinny with respect to the canonical homomorphism $\alpha(x)=x_1$. Moreover, this is not torsion because it pairs non-trivially with the homology class of
$$[(0,1)|(1,0)]-[(1,0)|(0,1)].$$

For the inductive step we will assume the theorem for $\Gamma$ and show it for~$\Gamma_\omega$ for all $[\omega]\in H^2(\Gamma)$. This is valid since any torsion-free finitely generated nilpotent group $\Lambda$, with Mal'cev basis $a_1,\ldots, a_m$, is the middle group in some central extension of $\Lambda/\langle a_m\rangle$ and the canonical homomorphism for $a_1,\ldots, a_m$ is the composition of the quotient map with the canonical homomorphism of a Mal'cev basis of $\Lambda/\langle a_m\rangle$, by Proposition~\ref{quotientB}.

Let $[\sigma]\in H^2(\Gamma)$ be a non-torsion cohomology class that is skinny with respect to the canonical homomorphism $\alpha$.

\medskip
\noindent
{\bf Case 1}: $\varphi_\omega^*([\sigma])$ is non-torsion. In this case the result follows from Lemma~\ref{pullback}.

\medskip
\noindent
{\bf Case 2:} $\varphi_\omega^*([\sigma])$ is torsion. In this case we use the fact that $\ker(\varphi_\omega^*)$ is spanned by $[\omega]$, by Proposition~\ref{phiker}. Thus we conclude that $k[\omega]=j[\sigma]$ where $j$ and $k$ are nonzero integers. Let $\psi:\Gamma_\omega\rightarrow\Gamma_{k\omega}$ be the map from Lemma~\ref{kmap}. Note that $j[\sigma]$ is also a skinny cohomology class. Since $\Gamma_{k\omega}\cong\Gamma_{j\sigma}$ we may let $[\gamma]\in H^2(\Gamma_{k\omega})$ be the cohomology class constructed in Lemma~\ref{hard}. Let $b\in\Gamma_\omega$ be an element so that $\alpha(\varphi_\omega(b))=1$. Then let $a=\psi(b)$. Note that $\varphi_{k\omega}(a)=\varphi_\omega(b)$ by the commutative diagram in Lemma~\ref{kmap}. Using the diagram again we get that $\psi(\iota_\omega(1))=\iota_{k\omega}(k)$. From Lemma~\ref{hard} we have that
\begin{align*}
k&=\langle\gamma,[a|\iota_{k\omega}(k)]-[\iota_{k\omega}(k)|a]\rangle\\
&=\langle\gamma,[\psi(b)|\psi(\iota_{\omega}(1))]-[\psi(\iota_{\omega}(1))|\psi(b)]\rangle\\
&=\langle\gamma,\psi_*([b|\iota_\omega(1)]-[\iota_\omega(1)|b])\rangle\\
&=\langle\psi^*(\gamma),[b|\iota_\omega(1)]-[\iota_\omega(1)|b]\rangle.
\end{align*}
Since the pairing on the last line is nonzero we have that $\psi^*([\gamma])$ cannot be torsion. Moreover by Lemma~\ref{pullback} we have that $\psi^*([\gamma])$ is skinny with respect to the homomorphism $\alpha\circ\varphi_{k\omega}\circ\psi=\alpha\circ\varphi_\omega$. 
\end{proof}

\section{Construction of Asymptotic Representations}

\begin{prop}\label{formula}
Let $\Gamma$ be a torsion-free finitely, generated nilpotent group with a specified Mal'cev basis, and let $[\sigma]$ be a cohomology class that is skinny with respect to the canonical homomorphism. By Proposition~\ref{skinnys} there is representative $\sigma$ of $[\sigma]$ so that $\sigma(x,y)=q(x_1,\ldots,x_m,y_1)$ for $q\in\Q[x_1,\ldots,x_m,y_1]$. Then for $n$ co-prime to the denominators of coefficients of $q$ there is a well-defined formula $\rho_n:\Gamma\rightarrow U(\ell^2(\Z/n\Z))$ defined by
$$\rho_n(x)\delta_{\bar j}=\exp\left(\frac{2\pi i}{n}q(x_1,\ldots,x_m,j)\right)\delta_{\bar{j}+\bar{x}_1}.$$
Here $j\in\Z$, $\bar{j}$ represents its reduction mod $n$ and $\delta_{\bar{j}}\in\ell^2(\Z/n\Z)$ is the standard basis element corresponding to $\bar{j}$.
\end{prop}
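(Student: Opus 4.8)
The only thing that needs checking is that, for each fixed $x\in\Gamma$, the prescription really does define a unitary operator on $\ell^2(\Z/n\Z)$; that $\rho_n$ is then a function on the whole group is automatic, since a Mal'cev basis furnishes a bijection $\Gamma\to\Z^m$, $g\mapsto(g_1,\dots,g_m)$, onto coordinate tuples. (Note this proposition asserts nothing about $\rho_n$ being multiplicative, even approximately; that is Proposition~\ref{asymp}.) So the plan is two steps. First I would show that the scalar $\exp\!\left(\tfrac{2\pi i}{n}\,p(x_1,\dots,x_m,j)\right)$ depends only on the class $\bar j\in\Z/n\Z$, so that $\rho_n(x)$ is a well-defined linear map. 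Then, granting this, $\rho_n(x)$ carries each standard basis vector $\delta_{\bar j}$ to a unimodular scalar times $\delta_{\bar j+\bar x_1}$; since $\bar j\mapsto\bar j+\bar x_1$ is a bijection of $\Z/n\Z$, the matrix of $\rho_n(x)$ is a permutation matrix whose $1$'s have been replaced by unit complex numbers, hence unitary. Thus the whole argument comes down to the first step.

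For that step I would first note that $p$ is integer-valued on all of $\Z^{m+1}$: the coordinates $(x_1,\dots,x_m)$ of an element of $\Gamma$ run over all of $\Z^m$, and the first coordinate $y_1$ of an element of $\Gamma$ runs over all of $\Z$, so any $(x_1,\dots,x_m,j)\in\Z^{m+1}$ is realized as $(x_1,\dots,x_m,y_1)$ for suitable $x,y\in\Gamma$, whence $p(x_1,\dots,x_m,j)=\sigma(x,y)\in\Z$. Now fix $x$, put $q(y)=p(x_1,\dots,x_m,y)\in\Q[y]$ (an integer-valued polynomial in one variable), and let $D$ be the least common multiple of the denominators of the coefficients of $p$, so that $\gcd(n,D)=1$ by hypothesis and $Dp\in\Z[x_1,\dots,x_m,y_1]$; substituting the integers $x_1,\dots,x_m$ yields $Dq\in\Z[y]$. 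For any integer $j$ the number $q(j+n)-q(j)$ is an integer, while $D\bigl(q(j+n)-q(j)\bigr)=(Dq)(j+n)-(Dq)(j)$ is divisible by $n$ simply because $Dq$ has integer coefficients. As $\gcd(n,D)=1$, this forces $n\mid q(j+n)-q(j)$, i.e. $q(j+n)\equiv q(j)\pmod n$; hence $\tfrac1n\bigl(q(j+n)-q(j)\bigr)\in\Z$ and the scalar is unchanged when $j$ is replaced by $j+n$. Iterating in both directions, the scalar depends only on $\bar j$, as needed.

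The main (and essentially only) obstacle is this last periodicity, and it is exactly where the coprimality hypothesis is genuinely used: one cannot hope for $p(\dots,j+n)\equiv p(\dots,j)\pmod n$ for an arbitrary $n$ — already $q(y)=\binom y2$ fails this for even $n$ — and the point of $\gcd(n,D)=1$ is precisely to let one clear the denominator, apply the trivial integer-polynomial congruence, and then cancel $D$ again modulo $n$. The remaining verifications (well-definedness of the index shift and unitarity of a scaled permutation matrix) are immediate.
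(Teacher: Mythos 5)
Your proof is correct and takes the same approach as the paper's. The paper's proof is quite terse—it simply asserts that $\bar j=\bar k$ implies $p(x_1,\ldots,x_m,j)\equiv p(x_1,\ldots,x_m,k)\pmod n$, deferring the justification to the cited reference—and you have correctly supplied the argument that makes this assertion work: $p$ is integer-valued because its values agree with those of the $\Z$-valued cocycle $\sigma$ as the Mal'cev coordinates range over all of $\Z^{m+1}$, and the congruence then follows by clearing the common denominator $D$, applying the elementary fact that an integer-coefficient polynomial satisfies $(Dq)(j+n)\equiv(Dq)(j)\pmod n$, and cancelling $D$ modulo $n$ using $\gcd(D,n)=1$. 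This is exactly where the coprimality hypothesis enters, as you correctly flag. Your unitarity observation (generalized permutation matrix with unimodular entries) is the same as the paper's (orthonormal basis maps to orthonormal basis). No gaps.
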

\begin{proof}This is a modification of the proof of \cite[Proposition 3.17]{constructive} that is pointed out in \cite[Remark 3.18]{constructive}.

We will show that $\rho_n$ is well-defined. This is because if $\bar{j}=\bar{k}$ then $q(x_1,\ldots,x_m,j)\equiv q(x_1,\ldots,x_m,k)\mod n$ so because the exponential depends only on the reduction of $q$ mod $n$ the formula is well-defined. This is a unitary because it takes an orthonormal basis to another orthonormal basis.
\end{proof}

This does not define $\rho_n$ for all $n$, but it does define $\rho_n$ for infinitely many~$n$. This is sufficient to prove that a group is not $p$-stable, because we may pick a sequence of integers $n_j$ enumerating the $n$ for which~$\rho_n$ is well-defined.

\begin{lemma}\label{chiform}
Define
$$\chi_n(x,y)=\exp\left(\frac{2\pi i}{n}\sigma(x,y)\right)$$
The formula $\rho_n$ defined in Proposition~\ref{formula} obeys the equation
$$\rho_n(xy)\rho_n(y)^{-1}\rho_n(x)^{-1}=\chi_n(x,y)^{-1}\id_{\ell^2(\Z/n\Z)}.$$
\end{lemma}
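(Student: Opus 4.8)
The statement is equivalent, after rearranging, to the multiplicativity-up-to-phase identity
\[
\rho_n(x)\rho_n(y)=\chi_n(x,y)\,\rho_n(xy)
\]
of operators on $\ell^2(\Z/n\Z)$: since $\rho_n$ takes values in the unitary group by Proposition~\ref{formula}, both $\rho_n(x)$ and $\rho_n(y)$ are invertible, and $\chi_n(x,y)$ is a unit scalar, so this identity gives $\rho_n(xy)=\chi_n(x,y)^{-1}\rho_n(x)\rho_n(y)$, and multiplying on the right by $\rho_n(y)^{-1}\rho_n(x)^{-1}$ yields the claimed formula. The plan is to evaluate both sides of $\rho_n(x)\rho_n(y)=\chi_n(x,y)\rho_n(xy)$ on a standard basis vector $\delta_{\bar j}$.

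First I would apply the defining formula of Proposition~\ref{formula} twice. Using the representative $j+y_1$ for $\bar j+\bar y_1$ (permissible since the phase depends only on the residue mod $n$), one gets
\[
\rho_n(x)\rho_n(y)\delta_{\bar j}=\exp\!\left(\tfrac{2\pi i}{n}\bigl(p(y_1,\dots,y_m,j)+p(x_1,\dots,x_m,j+y_1)\bigr)\right)\delta_{\overline{j+x_1+y_1}}.
\]
On the other side, since the canonical homomorphism $\alpha$ is a group homomorphism the first coordinate of $xy$ equals $x_1+y_1$, so $\rho_n(xy)$ sends $\delta_{\bar j}$ to the same basis vector $\delta_{\overline{j+x_1+y_1}}$, and
\[
\chi_n(x,y)\rho_n(xy)\delta_{\bar j}=\exp\!\left(\tfrac{2\pi i}{n}\bigl(p(x_1,\dots,x_m,y_1)+p((xy)_1,\dots,(xy)_m,j)\bigr)\right)\delta_{\overline{j+x_1+y_1}}.
\]
Thus it remains only to check that the two exponents agree mod $n$, i.e.\ that for every integer $j$
\[
p(y_1,\dots,y_m,j)+p(x_1,\dots,x_m,j+y_1)\equiv p(x_1,\dots,x_m,y_1)+p((xy)_1,\dots,(xy)_m,j)\pmod n.\tag{$\ast$}
\]

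The crux is to recognize $(\ast)$ as the $2$-cocycle identity for $\sigma$. Since $\alpha$ is the canonical homomorphism of a Mal'cev basis we have $\alpha(a_1)=1$, so $\alpha$ is surjective; choose $z\in\Gamma$ with $\alpha(z)=j$. Recalling from Proposition~\ref{skinnys} that $\sigma(g,h)=p(g_1,\dots,g_m,h_1)$ depends only on $g$ and $\alpha(h)=h_1$, and using additivity of $\alpha$ to write $j+y_1=\alpha(yz)$, the four terms of $(\ast)$ become, respectively, $\sigma(y,z)$, $\sigma(x,yz)$, $\sigma(x,y)$, and $\sigma(xy,z)$; and $\sigma(y,z)+\sigma(x,yz)=\sigma(x,y)+\sigma(xy,z)$ is exactly $\partial^2\sigma=0$ evaluated at $(x,y,z)$. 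This holds over $\Z$, hence mod $n$, and since $\alpha$ is surjective it holds for every $j\in\Z$, which proves $(\ast)$. I expect no real obstacle here: the only points needing care are tracking which integer representatives are substituted into $p$ when computing the phases, and reading $(\ast)$ with $j=\alpha(z)$ so that the cocycle relation applies. (Alternatively, this is precisely the computation carried out in \cite[Proposition 3.17 and Remark 3.18]{constructive}, which one could simply cite.)
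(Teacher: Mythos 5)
Your proof is correct and follows essentially the same strategy as the paper: evaluate the identity on standard basis vectors and reduce the matching of phases to the $2$-cocycle relation $\partial^2\sigma=0$ at a triple $(x,y,z)$ (with $j=\alpha(z)$, using surjectivity of $\alpha$). The only difference is cosmetic — the paper first derives the formula $\rho_n(x)^{-1}\delta_{\bar z_1}=\chi_n(x,x^{-1}z)^{-1}\delta_{\overline{z_1-x_1}}$ and then expands $\rho_n(xy)\rho_n(y)^{-1}\rho_n(x)^{-1}$ directly, while you rearrange to the projective-representation form $\rho_n(x)\rho_n(y)=\chi_n(x,y)\rho_n(xy)$ and compare both sides, which is a slightly cleaner but mathematically equivalent route.
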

\begin{proof} This is a modification of the proof of \cite[Lemma 3.19]{constructive}.

Note that since $\partial\sigma=0$ we have that for all $x,y,z\in\Gamma$
$$-\sigma(x,yz)+\sigma(xy,z)-\sigma(y,z)=-\sigma(x,y).$$
Exponentiating both sides we get
$$\chi_n(x,yz)^{-1}\chi_n(xy,z)\chi_n(y,z)^{-1}=\chi_n(x,y)^{-1}.$$
Next we claim that
$$\rho_n(x)^{-1}\delta_{\bar y_1}=\chi_n(x,x^{-1}y)^{-1}\delta_{\bar y_1-\bar x_1}.$$
To check this it suffices to compute that
\begin{align*}
\rho_n(x){\chi}_n(x,x^{-1}y)^{-1}\delta_{\bar y_1-\bar x_1}&={\chi}_n(x,x^{-1}y)^{-1}\rho_n(x)\delta_{\bar y_1-\bar x_1}\\
&={\chi}_n(x,x^{-1}y)^{-1}{\chi}_n(x,x^{-1}y)\delta_{\bar y_1}\\
&=\delta_{\bar y_1}.
\end{align*}
Combining these we get
\begin{align*}
\rho_n(xy)\rho_n(y)^{-1}\rho_n(x)^{-1}\delta_{\bar{z}_1}&=\rho_n(xy)\rho_n(y)^{-1}\chi_n(x,x^{-1}z)^{-1}\delta_{\bar{z}_1-\bar{x}_1}\\
&=\rho_n(xy)\chi_n(y,y^{-1}x^{-1}z)^{-1}\chi_n(x,x^{-1}z)^{-1}\delta_{\bar{z}_1-\bar{x}_1-\bar{y}_1}\\
&=\chi_n(xy,y^{-1}x^{-1}z)\chi_n(y,y^{-1}x^{-1}z)^{-1}\chi_n(x,x^{-1}z)^{-1}\delta_{\bar{z}_1}\\
&=\chi_n(x,y)^{-1}\delta_{\bar{z}_1}.
\end{align*}
\end{proof}

\begin{prop}\label{asymp}
Let $\Gamma$, $\sigma$ and $\rho_n$ be defined as in Proposition~\ref{formula}; let $1\le p<\infty$. Then the following bounds hold:
\begin{align*}
||\rho_n(xy)-\rho_n(x)\rho_n(y)||&\le\frac{2\pi|\sigma(x,y)|}{n}\\
||\rho_n(xy)-\rho_n(x)\rho_n(y)||_p&\le2\pi|\sigma(x,y)|n^{\frac1p-1}.
\end{align*}
\end{prop}
\begin{proof}
We compute
\begin{align*}
||\rho_n(xy)-\rho_n(x)\rho_n(y)||&=||\rho_n(xy)\rho_n(y)^{-1}\rho_n(x)^{-1}-\id_{\ell^2(\Z/n\Z)}||\\
&=||(\chi_n(x,y)^{-1}-1)\id_{\ell^2(\Z/n\Z)}||\\
&=|\chi_n(x,y)^{-1}-1|\\
&=\left|\exp\left(\frac{-2\pi i\sigma(x,y)}{n}\right)-1\right|\\
&\le\frac{2\pi|\sigma(x,y)|}{n}.
\end{align*}
For the Schatten $p$-norm note that
\begin{align*}
||\rho_n(xy)-\rho_n(x)\rho_n(y)||_p&\le||\rho_n(xy)-\rho_n(x)\rho_n(y)||n^{1/p}\\
&\le 2\pi|\sigma(x,y)|n^{\frac1p-1}.
\end{align*}
\end{proof}

\begin{thm}\label{main2}
If $\Gamma$ is a finitely generated torsion-free nilpotent group with a non-torsion skinny cohomology class, and $p<1\le\infty$ then $\Gamma$ is not $p$-stable. The sequence of maps $\rho_n$ defined in Proposition~\ref{formula} is a $p$-asymptotic representation that is not $p$-perturbable to a genuine representation.
\end{thm}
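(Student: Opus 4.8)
The plan is to verify the two assertions in turn; the second (non-perturbability) will follow from the homological winding-number obstruction of Theorem~\ref{obstruction}. That $\rho_n$ (the map of Proposition~\ref{formula}) is a Frobenius asymptotic representation is essentially immediate: letting $D$ be a common denominator for the coefficients of $p$, the map $\rho_n$ is defined for every $n$ coprime to $D$, and after enumerating these values as $n_1<n_2<\cdots$ the second inequality of Proposition~\ref{asymp} gives $\|\rho_{n_k}(xy)-\rho_{n_k}(x)\rho_{n_k}(y)\|_2\le 2\pi|\sigma(x,y)|/\sqrt{n_k}\to 0$ for all $x,y\in\Gamma$. By the definition of Frobenius stability it then remains only to show this sequence is not perturbable in the Frobenius norm to a sequence of genuine representations.

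The first real step is to produce a $2$-cycle that detects $[\sigma]$. Since $\Gamma$ is finitely generated, $H_1(\Gamma)$ is a finitely generated abelian group, so $\Ext(H_1(\Gamma),\Z)$ is finite; by the universal coefficient theorem this finite group is the kernel of the evaluation map $H^2(\Gamma)\to\Hom(H_2(\Gamma),\Z)$ sending $[\sigma]$ to $\langle[\sigma],\cdot\rangle$. As $[\sigma]$ is non-torsion it does not lie in this finite kernel, so there is a $2$-cycle $c=\sum_{i=1}^N x_i[a_i|b_i]$ with $d:=\langle\sigma,c\rangle\neq0$. The second step is to compute $\langle\rho_n,c\rangle$. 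By Lemma~\ref{chiform} one has $\rho_n(a_ib_i)\rho_n(b_i)^{-1}\rho_n(a_i)^{-1}=\exp\bigl(-\tfrac{2\pi i}{n}\sigma(a_i,b_i)\bigr)\id_{\ell^2(\Z/n\Z)}$, and by the operator-norm bound of Proposition~\ref{asymp} each such scalar lies in the unit disc about $1$ once $n$ is large, so $\langle\rho_n,c\rangle$ is defined; taking the logarithm power series and using $\Tr(\id_{\ell^2(\Z/n\Z)})=n$ yields
$$\langle\rho_n,c\rangle=\frac{1}{2\pi i}\sum_{i=1}^N x_i\,\Tr\Bigl(-\tfrac{2\pi i}{n}\sigma(a_i,b_i)\,\id_{\ell^2(\Z/n\Z)}\Bigr)=-\sum_{i=1}^N x_i\sigma(a_i,b_i)=-d\neq 0$$
for all sufficiently large $n$ in our index set.

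To finish, suppose for contradiction that $\rho_n$ is perturbable, so there are genuine unitary representations $\tilde\rho_n\colon\Gamma\to U(n)$ with $\|\tilde\rho_n(x)-\rho_n(x)\|_2\to 0$ for every $x\in\Gamma$. Since $\|\cdot\|\le\|\cdot\|_2$ this gives $\|\tilde\rho_n(g)-\rho_n(g)\|\to 0$, and as the set $\{a_i,b_i,a_ib_i\}_{i=1}^N$ is finite there is an $n$ in our index set, large enough for the preceding paragraph, with $\|\tilde\rho_n(g)-\rho_n(g)\|<\tfrac1{24}$ for every such $g$. Theorem~\ref{obstruction}, applied with $\rho_0=\tilde\rho_n$ and $\rho_1=\rho_n$, then forces $\langle\rho_n,c\rangle=0$, contradicting $\langle\rho_n,c\rangle=-d\neq0$. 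Hence no such perturbation exists and $\Gamma$ is not Frobenius stable.

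Most of this is bookkeeping: the well-definedness and evaluation of $\langle\rho_n,c\rangle$ are routine given Lemma~\ref{chiform} and Proposition~\ref{asymp}, and the passage from $[\sigma]$ non-torsion to a suitable $2$-cycle is standard homological algebra. The one point that has to be right is the interface between the two norms: the winding-number obstruction is an operator-norm statement, whereas the hypothesis being contradicted is Frobenius-norm closeness, and the argument goes through only because $\|M\|\le\|M\|_2$, so a Frobenius perturbation is automatically an operator-norm perturbation on the finite set $\{a_i,b_i,a_ib_i\}$. I expect that to be the conceptual crux, together with the minor nuisance that $\rho_n$ is defined only along the subsequence of $n$ coprime to the denominators of $p$.
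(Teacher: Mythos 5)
Your proposal is correct and follows essentially the same route as the paper: extract a $2$-cycle $c$ pairing nontrivially with $\sigma$ via the universal coefficient theorem (using that $\Ext(H_1(\Gamma),\Z)$ is finite/torsion), compute $\langle\rho_n,c\rangle=-\langle\sigma,c\rangle\neq 0$ from Lemma~\ref{chiform}, and invoke Theorem~\ref{obstruction} together with $\|\cdot\|\le\|\cdot\|_2$ to rule out Frobenius perturbability. Your explicit handling of the subsequence of admissible $n$ and of the norm comparison is slightly more careful than the paper's, but the argument is the same.
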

\begin{proof}This is a modification of the proof of \cite[Theorem 3.20]{constructive}.

Asymptotic multiplicativity follows from Proposition~\ref{asymp}.

Now we will show that $\rho_n$ cannot be perturbed to a genuine representation in the operator norm, which implies that it cannot be perturbed in any unnormalized Schatten $p$-norm because all the operator norm bounds all other Schatten $p$-norms below. To that end we claim that there is some element $c\in H_2(\Gamma)$ so that $\langle\sigma,c\rangle\ne0$. To see this we note that from the universal coefficient theorem \cite[Theorem~53.1]{eat}  we have a short exact sequence
$$\begin{tikzcd}
0\arrow{r}{}&\Ext(H_1(\Gamma),\Z)\arrow{r}{}& H^2(\Gamma)\arrow{r}{}&\Hom(H_2(\Gamma),\Z)\arrow{r}{}&0
\end{tikzcd}.$$
Note that since $\Gamma$ is finitely generated $H_1(\Gamma)\cong\Gamma/[\Gamma,\Gamma]$ \cite[page 36]{coho} is finitely generated as well. Thus $\Ext(H_1(\Gamma),\Z))$ can be show to be torsion from \cite[Theorem 52.3]{eat} and the table on \cite{eat} page~331. Thus since $\sigma$ is non-torsion it must pair nontrivially with some element $c$ of the 2-homology which we write
$$c=\sum_{j=1}^Nx_j[a_j|b_j].$$

Note that $\rho_n$ is asymptotically multiplicative in operator norm. Thus the pairing $\langle\rho_n,c\rangle$ defined in
Definition~\ref{pairing} will be defined for large enough $n$. For such $n$ we compute
\begin{align*}
\langle\rho_n,c\rangle&=\frac{1}{2\pi i}\sum_{j=1}^N x_j\Tr(\log(\rho_n(a_jb_j)\rho_n(b_j)^{-1}\rho_n(a_j)^{-1}))\\
&=\frac{1}{2\pi i}\sum_{j=1}^N x_j\Tr(\log(\chi_n(a_jb_j)^{-1}\id_{\ell^2(\Z/n\Z)}))&\mbox{by Lemma~\ref{chiform}}\\
&=-\frac{1}{2\pi i}\sum_{j=1}^N x_j\frac{2\pi i}{n}\sigma(a_j,b_j)\Tr(\id_{\ell^2(\Z/n\Z)})\\
&=-\langle\sigma,c\rangle\\
&\ne0.
\end{align*}
Then by Theorem~\ref{obstruction} $\rho_n$ cannot be within $\frac{1}{24}$ of a genuine representation in the operator norm on the set $\{a_j,b_j,a_jb_j\}_{j=1}^N$. It follows that it cannot be within $\frac{1}{24}$ of a genuine representation in the Schatten $p$-norm on this set either.
\end{proof}

Next we will prove Theorem~\ref{main3}:

\begin{proof}
Let $\Gamma$ be a finitely generated nilpotent group of that is not locally cyclic. By Proposition~\ref{vcyc}~$\Gamma$ has Hirsch length at least 2. Let $T$ be the subset of torsion elements. This is a finite normal subgroup by Proposition~\ref{torsion}. By Definition~\ref{hirsch2}, $\Gamma/T$ has a Hirsch length of at least 2 as well. By Theorem~\ref{main1} and Theorem~\ref{main2} we know that $\Gamma/T$ is not $p$-stable. By Lemma~\ref{quotient} we have that $\Gamma$ cannot be $p$-stable either.

Now let $\Gamma$ be a virtually cyclic group. Thus $\Gamma$ is a virtually free group so we may apply Lemma~\ref{vfree} to show that $\Gamma$ is Frobenius stable.
\end{proof}

\begin{rmk}
If $\Gamma$ is a finitely generated nilpotent group that is not virtually cyclic a $p$-asymptotic representation that is not $p$-perturbable to a genuine representation can be found as follows. If $T$ is the torsion subgroup of~$\Gamma$, apply Proposition~\ref{formula} to $\Gamma/T$ to define $\rho_n:\Gamma/T\rightarrow U(n)$. Then if $f:\Gamma\rightarrow\Gamma/T$ is the quotient map $\rho_n\circ f$ is the desired sequence of maps.
\end{rmk}

\begin{center}
\textbf{Acknowledgments}
\end{center}
I would like to thank my advisor Marius Dadarlat for pointing me towards this problem and for his advice along the way. I would also like to thank an anonymous referee taking the time to read the paper closely and providing many helpful comments and suggestions. A question posed by the referee and a conversation with Marius Dadarlat and Iason Moutzouris helped me to locate the right result in the literature to drop the torsion-free assumption from the main result; I would like to thank all three. Finally, I would like to thank the Purdue Mathematics Department for supporting me with the summer research grant for the summer of 2022.
\medskip
\bibliographystyle{plain}
\bibliography{main}
\end{document}